\newcommand{\xb}{\mathbf{x}}
\newcommand{\diag}{\mathop{\mathrm{diag}}}
\newcommand{\trs}{\mathop{\mathrm{tr}}}
\newcommand{\norm}[1]{\ensuremath{\left\| #1 \right\|}}
\newcommand{\refeqn}[1]{(\ref{eqn:#1})}
\newcommand{\SO}{\ensuremath{\mathsf{SO(3)}}}
\newcommand{\so}{\ensuremath{\mathfrak{so}(3)}}
\renewcommand{\Re}{\ensuremath{\mathbb{R}}}
\newcommand{\Sph}{\ensuremath{\mathsf{S}}}
\newcommand{\g}{\ensuremath{\mathfrak{g}}}
\newtheorem{prop}{Proposition}
\newcommand{\Pb}{\mathbf{P}}
\newcommand{\Mb}{\mathbf{M}}
\newcommand{\Nb}{\mathbf{N}}
\newcommand{\Bb}{\mathbf{B}}
\newcommand{\Gb}{\mathbf{G}}
\newcommand{\zb}{\mathbf{z}}
\title{\LARGE \bf Dynamics and Control of Quadrotor UAVs Transporting a Rigid Body Connected via Flexible Cables}
\author{Farhad A. Goodarzi and Taeyoung Lee$^{*}$
\thanks{Farhad A. Goodarzi and Taeyoung Lee, Mechanical and Aerospace Engineering, The George
Washington University, Washington DC 20052
{\tt\small {\{fgoodarzi,tylee}\}@gwu.edu}}%
\thanks{$^*$This research has been supported in part by NSF under the grants CMMI-1243000 (transferred from 1029551), CMMI-1335008, and CNS-1337722.}}
\begin{document}
\maketitle
\pagestyle{empty}

\begin{abstract}
This paper is focused on the dynamics and control of arbitrary number of quadrotor UAVs transporting a rigid body payload. The rigid body payload is connected to quadrotors via flexible cables where each flexible cable is modeled as a system of serially-connected links. It is shown that a coordinate-free form of equations of motion can be derived for arbitrary numbers of quadrotors and links according to Lagrangian mechanics on a manifold. A geometric nonlinear controller is presented to transport the rigid body to a fixed desired position while aligning all of the links along the vertical direction. Numerical results are provided to illustrate the desirable features of the proposed control system.
\end{abstract}

\section{introduction}

There are various applications for aerial load transportation such as usage in construction, military operations, emergency response, or delivering packages. Load transportation with the cable-suspended load has been studied traditionally for a helicopter~\cite{CicKanJAHS95,BerPICRA09} or for small unmanned aerial vehicles such as quadrotor UAVs~\cite{PalCruIRAM12,MicFinAR11,MazKonJIRS10}. 

In most of the prior works, the dynamics of aerial transportation has been simplified due to the inherent dynamic complexities. For example, it is assumed that the dynamics of the payload is considered completely decoupled from quadrotors, and the effects of the payload and the cable are regarded as arbitrary external forces and moments exerted to the quadrotors~\cite{ ZamStaJDSMC08, SchMurIICRA12, PalFieIICRA12}, thereby making it challenging to suppress the swinging motion of the payload actively, particularly for agile aerial transportations.


\begin{figure}
\centerline{
	\setlength{\unitlength}{0.09\columnwidth}\scriptsize
\begin{picture}(5,8.8)(0,0)
\put(0,0){\includegraphics[width=.6\columnwidth]{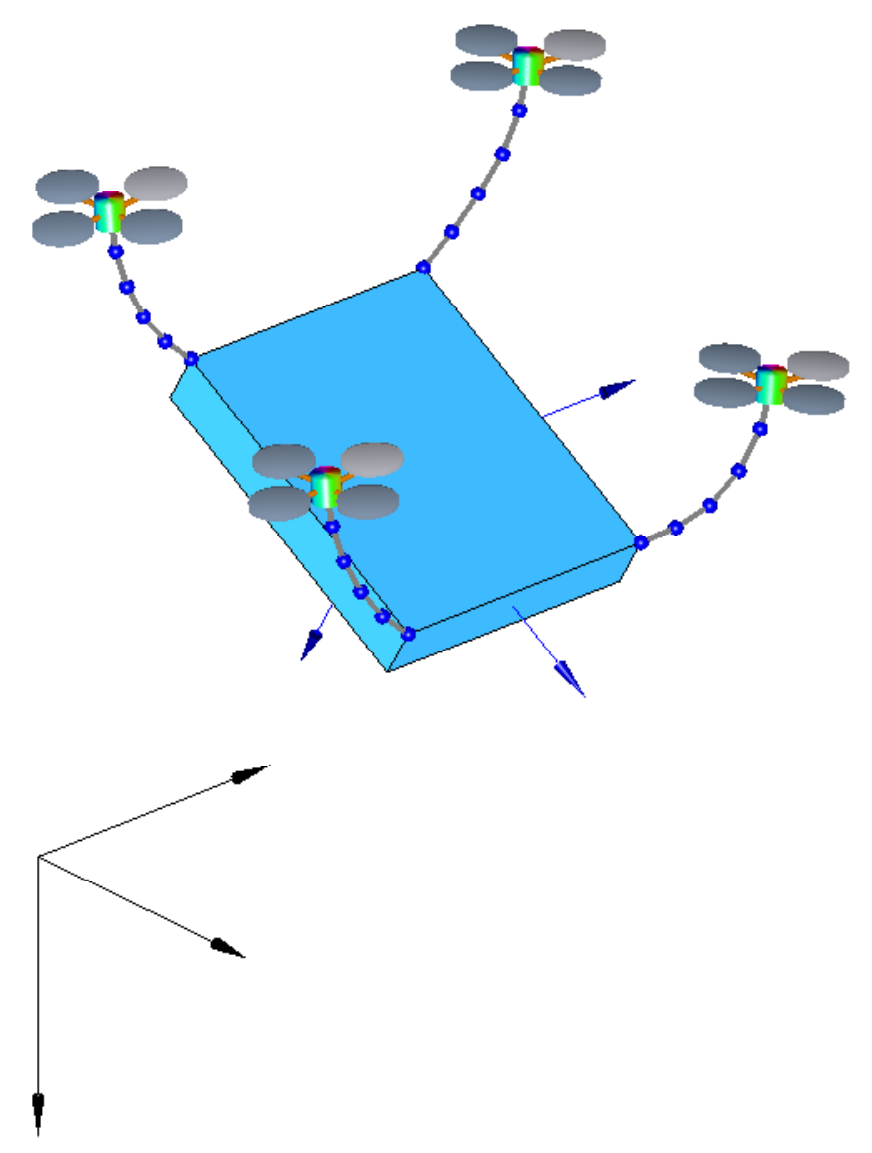}}
\put(3.6,5.3){\shortstack[c]{$m_{0}$}}
\put(3.6,4.9){\shortstack[c]{$J_{0}$}}
\put(-0.4,7.3){\shortstack[c]{$m_{1}$}}
\put(0.3,6.2){\shortstack[c]{$m_{1j}$}}
\put(-0.4,6.9){\shortstack[c]{$J_{1}$}}
\put(4.9,8.5){\shortstack[c]{$m_{2}$}}
\put(3.9,7.1){\shortstack[c]{$m_{2j}$}}
\put(4.9,8.1){\shortstack[c]{$J_{2}$}}
\put(6.7,6.0){\shortstack[c]{$m_{3}$}}
\put(5.9,5.0){\shortstack[c]{$m_{3j}$}}
\put(6.7,5.6){\shortstack[c]{$J_{3}$}}
\put(2.2,2.8){\shortstack[c]{$e_{1}$}}
\put(1.9,1.3){\shortstack[c]{$e_{2}$}}
\put(0.1,-0.1){\shortstack[c]{$e_{3}$}}
\put(4.9,5.9){\shortstack[c]{$b_{1}$}}
\put(4.6,3.2){\shortstack[c]{$b_{2}$}}
\put(2.0,3.4){\shortstack[c]{$b_{3}$}}
\end{picture}
}
\caption{Quadrotor UAVs with a rigid body payload. Cables are modeled as a serial connection of an arbitrary number of links (only 4 quadrotors with 5 links in each cable are illustrated).}\label{fig:fig1}
\end{figure}

Recently, the coupled dynamics of the payload or cable has been explicitly incorporated into control system design~\cite{LeeSrePICDC13}. In particular, a complete model of a quadrotor transporting a payload modeled as a point mass, connected via a flexible cable is presented, where the cable is modeled as serially connected links to represent the deformation of the cable~\cite{gooddaewontaeyoungacc14}. In another distinct study, multiple quadrotors transporting  a rigid body payload has been studied~\cite{LeeMultipleRigid14}, but it is assume that the cables connecting the rigid body payload and quadrotors are always taut. These assumptions and simplifications in the dynamics of the system reduce the stability of the controlled system, particularly in rapid and aggressive load transportation where the motion of the cable and payload is excited nontrivially.

The first distinct contribution of this paper is presenting the complete dynamic model of an arbitrary number of quadrotors transporting a rigid body where each quadrotor is connected to the rigid body via a flexible cable. Each flexible cable is modeled as an arbitrary number of serially connected links, and it is valid for various masses and lengths. A coordinate free form of equations of motion is derived according to Lagrange mechanics on a nonlinear manifold for the full dynamic model. These sets of equations of motion are presented in a complete and organized manner without any simplification.


Another contribution of this study is designing a control system to stabilize the rigid body at desired position. Geometric nonlinear controllers presented in the author's previous study is utilized~\cite{LeeLeoPICDC10,LeeLeoAJC13,Farhad2013}, and they are generalized for the presented model. More explicitly, we show that the rigid body payload is asymptotically transported into a desired location, while aligning all of the links along the vertical direction corresponding to a hanging equilibrium.


The unique property of the proposed control system is that the nontrivial coupling effects between the dynamics of rigid payload, flexible cables, and multiple quadrotors are explicitly incorporated into control system design, without any simplifying assumption. Another distinct feature is that the equations of motion and the control systems are developed directly on the nonlinear configuration manifold intrinsically. Therefore, singularities of local parameterization are completely avoided to generate agile maneuvers of the payload in a uniform way. In short, the proposed control system is particularly useful for rapid and safe payload transportation in complex terrain, where the position of the payload should be controlled concurrently while suppressing the deformation of the cables.

This paper is organized as follows. A dynamic model is presented and the problem is formulated at Section II. Control systems are constructed at Sections III and IV, which are followed by numerical examples in Section V. Due to the page limit, parts of proofs are relegated to~\cite{FarhadTLeeMultiple}.

\section{Problem Formulation}

Consider a rigid body with mass $m_{0}\in\Re$ and moment of inertia $J_{0}\in\Re^{3\times 3}$, being transported with arbitrary $n$ numbers of quadrotors. The location of the mass center of the rigid body is denoted by $x_{0}\in\Re^{3}$, and its attitude is given by $R_{0}\in\SO$, where the special orthogonal group is given by $\SO=\{R\in\Re^{3\times 3} \mid R^{T}R=I,\det(R)=1\}$. Figure \ref{fig:fig1} illustrates the system with an inertial frame. We choose an inertial frame $\{\vec{e}_{1},\vec{e}_{2},\vec{e}_{3}\}$ and body fixed frame $\{\vec{b}_{1},\vec{b}_{2},\vec{b}_{3}\}$ attached to the payload. We also consider a body fixed frame attached to the $i$-th quadrotor $\{\vec{b}_{1_i},\vec{b}_{2_i},\vec{b}_{3_i}\}$. In the inertial frame, the third axes $\vec{e}_{3}$ points downward with gravity and the other axes are chosen to form an orthonormal frame. 

The mass and the moment of inertia of the $i$-th quadrotor are denoted by $m_{i}\in\Re$ and $J_{i}\in\Re^{3\times 3}$ respectively. The cable connecting each quadrotor to the rigid body is modeled as an arbitrary numbers of links for each quadrotor with varying masses and lengths. The direction of the $j$-th link of the $i$-th quadrotor, measured outward from the quadrotor toward the payload is defined by the unit vector $q_{ij}\in\Sph^2$, where $\Sph^2=\{q\in\Re^{3}\mid\|q\|=1\}$, where the mass and length of that link is denoted with $m_{ij}$ and $l_{ij}$ respectively. The number of links in the cable connected to the $i$-th quadrotor is defined as $n_{i}$.

The configuration manifold for this system is given by $\SO\times \Re^{3}\times (\SO^{n})\times (\Sph^2)^{\sum_{i=1}^{n}n_{i}}$. The $i$-th quadrotor can generate a thrust force of $-f_{i}R_{i}e_{3}\in\Re^{3}$ with respect to the inertial frame, where $f_{i}\in\Re$ is the total thrust magnitude of the $i$-th quadrotor. It also generates a moment $M_{i}\in\Re^{3}$ with respect to its body-fixed frame.
Throughout this paper, the two norm of a matrix $A$ is denoted by $\|A\|$. The standard dot product is denoted by $x\cdot y=x^{T}y$ for any $x,y\in\Re^{3}$.
\subsection{Lagrangian}
The kinematics equations for the links, payload, and quadrotors are given by
\begin{gather}
\dot{q}_{ij}=\omega_{ij}\times q_{ij}=\hat{\omega}_{ij}q_{ij},\\
\dot{R}_{0}=R_{0}\hat{\Omega}_{0},\\
\dot{R}_{i}=R_{i}\hat{\Omega}_{i},
\end{gather}
where $\omega_{ij}\in\Re^{3}$ is the angular velocity of the $j$-th link in the $i$-th cable satisfying $q_{ij}\cdot\omega_{ij}=0$. Also, $\Omega_{0}\in\Re^{3}$ is the angular velocity of the payload and $\Omega_{i}\in\Re^{3}$ is the angular velocity of the $i$-th quadrotor, expressed with respect to the corresponding body fixed frame. The hat map $\hat{\cdot}:\Re^{3}\rightarrow\so$ is defined by the condition that $\hat{x}y=x\times y$ for all $x,y\in\Re^{3}$, and the inverse of the hat map is denoted by the vee map $\vee:\so\rightarrow\Re^{3}$.

The position of the $i$-th quadrotor is given by
\begin{align}\label{eqn:xi}
x_{i}=x_{0}+R_{0}\rho_{i}-\sum_{a=1}^{n_{i}}{l_{ia}q_{ia}},
\end{align}
where $\rho_{i}\in\Re^{3}$ is the vector from the center of mass of the rigid body to the point that $i$-th quadrotor is connected to rigid body via the cable. Similarly the position of the $j$-th link in the cable connecting the $i$-th quadrotor to the rigid body is given by
\begin{align}\label{eqn:xij}
x_{ij}=x_{0}+R_{0}\rho_{i}-\sum_{a=j+1}^{n_{i}}{l_{ia}q_{ia}}.
\end{align}

We derive equations of motion according to Lagrangian mechanics. Total kinetic energy of the system is given by
\begin{align}
T=&\frac{1}{2}m_{0}\|\dot{x}_{0}\|^{2}+\sum_{i=1}^{n}\sum_{j=1}^{n_{i}}{\frac{1}{2}m_{ij}\|\dot{x}_{ij}\|^{2}}+\frac{1}{2}\sum_{i=1}^{n}{m_{i}\|\dot{x}_{i}\|^{2}}\nonumber\\
&+\frac{1}{2}\sum_{i=1}^{n}{\Omega_{i}\cdot J_{i}\Omega_{i}}+\frac{1}{2}\Omega_{0}\cdot J_{0}\Omega_{0}.
\end{align}
The gravitational potential energy is given by
\begin{align}
V=-m_{0}ge_{3}\cdot x_{0}-\sum_{i=1}^{n}{m_{i}ge_{3}}\cdot x_{i}-\sum_{i=1}^{n}\sum_{j=1}^{n_{i}}{m_{ij}ge_{3}}\cdot x_{ij},
\end{align}
where it is assumed that the unit-vector $e_{3}$ points downward along the gravitational acceleration as shown at Figure \ref{fig:fig1}. The corresponding Lagrangian of the system is $L=T-V$.

\subsection{Euler-Lagrange equations}
Coordinate-free form of Lagrangian mechanics on the two-sphere $\Sph^2$ and the special orthogonal group $\SO$ for various multibody systems has been studied in~\cite{Lee08,LeeLeoIJNME08}. The key idea is representing the infinitesimal variation of $R_i\in\SO$ in terms of the exponential map
\begin{align}
\delta R_{i} = \frac{d}{d\epsilon}\bigg|_{\epsilon = 0} R_{i}\exp(\epsilon \hat\eta_{i}) = R_{i}\hat\eta_{i},\label{eqn:delR}
\end{align}
for $\eta_{i}\in\Re^3$. The corresponding variation of the angular velocity is given by $\delta\Omega_{i}=\dot\eta_{i}+\Omega_{i}\times\eta_{i}$. Similarly, the infinitesimal variation of $q_{ij}\in\Sph^2$ is given by
\begin{align}
\delta q_{ij} = \xi_{ij}\times q_{ij},\label{eqn:delqi}
\end{align}
for $\xi_{ij}\in\Re^3$ satisfying $\xi_{ij}\cdot q_{ij}=0$. This lies in the tangent space as it is perpendicular to $q_{i}$. Using these, we obtain the following Euler-Lagrange equations.

\begin{prop}\label{prop:FDM}
By using the above expressions, the equations of motion can be obtained from Hamilton's principle:
\begin{gather}\label{eqn:EOM}
M_{T}\ddot{x}_{0}-\sum_{i=1}^{n}\sum_{j=1}^{n_{i}}{M_{0ij}l_{ij}\ddot{q}_{ij}}-\sum_{i=1}^{n}{M_{iT}R_{0}\hat{\rho}_{i}\dot{\Omega}_{0}} \nonumber\\
=M_{T}ge_{3}+\sum_{i=1}^{n}{-f_{i}R_{i}e_{3}}-\sum_{i=1}^{n}{M_{iT}R_{0}\hat{\Omega}_{0}^2 \rho_{i}},\label{eqn:EOMM1}\\
\bar{J}_{0}\dot{\Omega}_{0}+\sum_{i=1}^{n}{M_{iT}\hat{\rho}_{i}R_{0}^{T}\ddot{x}_{0}}-\sum_{i=1}^{n}\sum_{j=1}^{n_{i}}{M_{0ij}l_{ij}\hat{\rho}_{i}R_{0}^{T}\ddot{q}_{ij}} \nonumber\\
=\sum_{i=1}^{n}{\hat{\rho}_{i}R_{0}^{T}(-f_{i}R_{i}e_{3}+M_{iT}ge_{3})}-\hat{\Omega}_{0}\bar{J}_{0}\Omega_{0},\label{eqn:EOMM2}\\
\sum_{k=1}^{n_{i}}{M_{0ij}l_{ik}\hat{q}_{ij}^{2}\ddot{q}_{ik}}-M_{0ij}\hat{q}_{ij}^{2}\ddot{x}_{0}+M_{0ij}\hat{q}_{ij}^{2}R_{0}\hat{\rho}_{i}\dot{\Omega}_{0} \nonumber\\
=M_{0ij}\hat{q}_{ij}^{2}R_{0}\hat{\Omega}_{0}^{2}\rho_{i}-\hat{q}_{ij}^{2}(M_{0ij}ge_{3}-f_{i}R_{i}e_{3}),\label{eqn:EOMM3}\\
J_{i}\Omega_{i}+\Omega_{i}\times J_{i}\Omega_{i}=M_{i}\label{eqn:EOMM4}.
\end{gather}
Here the total mass $M_{T}$ of the system and the mass of the $i$-th quadrotor and its flexible cable $M_{iT}$ are defined as
\begin{gather}
M_{T}=m_{0}+\sum_{i=1}^{n}M_{iT},\; M_{iT}=\sum_{j=1}^{n_{i}}{m_{ij}}+m_{i},\label{eqn:def1}
\end{gather}
and the constants related to the mass of links are given as
\begin{align}
M_{0ij}&=m_{i}+\sum_{a=1}^{j-1}{m_{ia}}\label{eqn:def3},
\end{align}
The equations of motion can be rearranged in a matrix form as follow
\begin{align}
\Nb\ddot{X}=\Pb
\end{align}
where the state vector $X\in\Re^{D_{X}}$ with $D_{X}=6+3\sum_{i=1}^{n}n_{i}$ is given by
\begin{align}
X=[{x}_{0},\; {\Omega}_{0},\; {q}_{1j},\; {q}_{2j},\; \cdots,\; {q}_{nj}]^{T},
\end{align}
and matrix $\Nb\in\Re^{D_{X}\times D_{X}}$ is defined as
\begin{align}\label{eqn:EOM11}
\Nb=\begin{bmatrix}
M_{T}I_{3}&\Nb_{x_{0}\Omega_{0}}&\Nb_{x_{0}1}&\Nb_{x_{0}2}&\cdots&\Nb_{x_{0}n}\\
\Nb_{\Omega_{0} x_{0}}&\bar{J}_{0}&\Nb_{\Omega_{0}1}&\Nb_{\Omega_{0}2}&\cdots&\Nb_{\Omega_{0}n}\\
\Nb_{1 x_{0}}&\Nb_{1\Omega_{0}}&\Nb_{qq1}&0&\cdots&0\\
\Nb_{2 x_{0}}&\Nb_{2\Omega_{0}}&0&\Nb_{qq2}&\cdots&0\\
\vdots&\vdots&\vdots&\vdots&\vdots&\vdots\\
\Nb_{n x_{0}}&\Nb_{n\Omega_{0}}&0&0&\cdots&\Nb_{qqn}
\end{bmatrix},
\end{align}
where the sub-matrices are defined as
\begin{gather}
\Nb_{x_{0}\Omega_{0}}=-\sum_{i=1}^{n}{M_{iT}R_{0}\hat{\rho}_{i}};\; \Nb_{\Omega_{0} x_{0}}=\Mb_{x_{0}\Omega_{0}}^{T},\nonumber\\
\Nb_{x_{0}i}=-[M_{0i1}l_{i1}{I}_{3},\; M_{0i2}l_{i2}{I}_{3},\; \cdots,\;M_{0in_{i}}l_{in_{i}}{I}_{3}],\nonumber\\
\Nb_{\Omega_{0}i}=-[M_{0i1}l_{i1}\hat{\rho}_{i}R_{0}^{T},\; M_{0i2}l_{i2}\hat{\rho}_{i}R_{0}^{T},\; \cdots,\; M_{0in_{i}}l_{in_{i}}\hat{\rho}_{i}R_{0}^{T}],\nonumber\\
\Nb_{ix_{0}}=-[M_{0i1}\hat{q}_{i1}^{2},\; M_{0i2}\hat{q}_{i2}^{2},\; \cdots,\;M_{0in_{i}}\hat{q}_{in_{i}}^{2}]^{T},\nonumber\\
\Nb_{i\Omega_{0}}=[M_{0i1}\hat{q}_{i1}^{2}R_{0}\hat{\rho}_{i},\; M_{0i2}\hat{q}_{i2}^{2}R_{0}\hat{\rho}_{i},\; \cdots,\; M_{0in_{i}}\hat{q}_{in_{i}}^{2}R_{0}\hat{\rho}_{i}]^{T},
\end{gather}
and the sub-matrix $\Nb_{qqi}\in\Re^{3n_i\times 3n_i}$ is given by
\begin{align}\Nb_{qqi}=
\begin{bmatrix}
-M_{011}l_{i1}I_{3}&M_{012}l_{i2}\hat{q}_{i2}^2&\cdots&M_{01n_{i}}l_{in_{i}}\hat{q}_{in_{i}}^2\\
M_{021}l_{i1}\hat{q}_{i1}^2&-M_{022}l_{i2}I_{3}&\cdots&M_{02n_{i}}l_{in_{i}}\hat{q}_{in_{i}}^2\\
\vdots&\vdots&&\vdots\\
M_{0n_{i}1}l_{i1}\hat{q}_{i1}^2&M_{0n_{i}2}l_{i2}\hat{q}_{i2}^2&\cdots&-M_{0n_{i}n_{i}}l_{in_{i}}I_{3}
\end{bmatrix}.
\end{align}
The $\Pb\in\Re^{D_{X}}$ matrix is
\begin{align}
\Pb=[P_{x_{0}},\; P_{\Omega_{0}},\; P_{1j},\; P_{2j},\; \cdots,\; P_{nj}]^{T},
\end{align}
and sub-matrices of $\Pb$ matrix are also defined as 
\begin{align*}
P_{x_{0}}&=M_{T}ge_{3}+\sum_{i=1}^{n}{-f_{i}R_{i}e_{3}}-\sum_{i=1}^{n}{M_{iT}R_{0}\hat{\Omega}_{0}^2\rho_{i}},\\
P_{\Omega_{0}}&=-\hat{\Omega}_{0}\bar{J}_{0}\Omega_{0}+\sum_{i=1}^{n}{\hat{\rho}_{i}R_{0}^T(M_{iT}ge_{3}-f_{i}R_{i}e_{3})},\nonumber\\
P_{ij}=&-\hat{q}_{ij}^2(-f_{i}R_{i}e_{3}+M_{0ij}ge_{3})+M_{0ij}\hat{q}_{ij}^2 R_{0}\hat{\Omega}_{0}^2 \rho_{i}\\
&+M_{0ij}\|\dot{q}_{ij}\|^{2}q_{ij}.\nonumber
\end{align*}
\end{prop}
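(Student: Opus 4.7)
My strategy is to apply Hamilton's variational principle $\delta\int_{t_0}^{t_1} L\,dt+\int_{t_0}^{t_1}\delta W\,dt=0$ on the nonlinear configuration manifold $\SO\times\Re^3\times\SO^n\times(\Sph^2)^{\sum n_i}$, using the constrained variation formulas \refeqn{delR} and \refeqn{delqi}. The independent variations are the free $\delta x_0\in\Re^3$, the $\SO$-variations parametrized by $\eta_0,\eta_i\in\Re^3$, and the $\Sph^2$-variations by $\xi_{ij}\in\Re^3$ subject to $\xi_{ij}\cdot q_{ij}=0$. The virtual work of the external inputs is $\delta W=\sum_i(-f_iR_ie_3)\cdot\delta x_i+\sum_i M_i\cdot\eta_i$, so after collecting terms against each independent variation, the four coupled equations \refeqn{EOMM1}--\refeqn{EOMM4} fall out one by one.

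\textbf{Key computational steps.} First I would differentiate the position identities \refeqn{xi} and \refeqn{xij} to write $\dot{x}_i$ and $\dot{x}_{ij}$ as linear combinations of $\dot{x}_0$, $R_0\hat{\Omega}_0\rho_i$, and the link velocities $l_{ia}\hat{\omega}_{ia}q_{ia}$. Taking variations and substituting $\delta R_0=R_0\hat{\eta}_0$ and $\delta q_{ia}=\xi_{ia}\times q_{ia}$ gives $\delta x_i,\delta x_{ij}$ as linear functionals of $\delta x_0$, $\eta_0$, and $\{\xi_{ia}\}$. Plugging these into $\delta T$ and $\delta V$, using $\delta\Omega_0=\dot{\eta}_0+\Omega_0\times\eta_0$ (and similarly for $\Omega_i$), and integrating by parts in time with vanishing endpoint variations converts every $\delta\dot{(\cdot)}$ into an acceleration term paired with the undifferentiated variation. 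The coefficient of $\delta x_0$ directly yields the translational equation \refeqn{EOMM1}, the coefficient of $\eta_0$ yields the payload attitude equation \refeqn{EOMM2} (with the $-\hat{\Omega}_0\bar{J}_0\Omega_0$ gyroscopic term inherited from the standard derivation), and the quadrotor equation \refeqn{EOMM4} is just Euler's rigid-body equation driven by $M_i$.

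\textbf{Main obstacle.} The most delicate step is the link equation \refeqn{EOMM3}, because $\xi_{ij}$ is not a free variation but obeys $\xi_{ij}\cdot q_{ij}=0$. The cleanest route is to collect the coefficient of $\xi_{ij}$ as some vector $v_{ij}\in\Re^3$, requiring $v_{ij}\cdot\xi_{ij}=0$ for every $\xi_{ij}\perp q_{ij}$; this is equivalent to $(-\hat{q}_{ij}^2)v_{ij}=0$ since $-\hat{q}_{ij}^2=I-q_{ij}q_{ij}^T$ is the orthogonal projector onto $T_{q_{ij}}\Sph^2$. Left-multiplying the collected stationarity condition by $-\hat{q}_{ij}^2$ kills the radial (tension) component along $q_{ij}$ and produces exactly the form in \refeqn{EOMM3}; the extra $M_{0ij}\|\dot{q}_{ij}\|^2 q_{ij}$ that appears inside $P_{ij}$ is the centripetal contribution from $\hat{q}_{ij}^2\ddot{q}_{ij}=\ddot{q}_{ij}+\|\dot{q}_{ij}\|^2 q_{ij}$ on the unit sphere, after $\ddot{q}_{ij}$ is moved to the left-hand side.

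\textbf{Matrix assembly.} With \refeqn{EOMM1}--\refeqn{EOMM3} in hand, the block form $\Nb\ddot{X}=\Pb$ in \refeqn{EOM11} is just bookkeeping: one reads off the coefficients of $\ddot{x}_0$, $\dot{\Omega}_0$, and each $\ddot{q}_{ij}$ to populate the diagonal blocks $M_TI_3$, $\bar{J}_0$, and $\Nb_{qqi}$, and the cross-coupling blocks $\Nb_{x_0\Omega_0}$, $\Nb_{\Omega_0 i}$, $\Nb_{i x_0}$, $\Nb_{i\Omega_0}$; all velocity-quadratic, gravity, thrust, and centripetal terms are routed into $\Pb$. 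The symmetries $\Nb_{\Omega_0 x_0}=\Nb_{x_0\Omega_0}^T$ and $\Nb_{ix_0}=-\Nb_{x_0 i}^T|_{\text{with }\hat{q}_{ij}^2}$ are automatic consequences of the symmetry of the kinetic-energy Hessian, which serves as a useful sanity check.
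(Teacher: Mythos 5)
Your proposal follows essentially the same route as the paper's proof: the Lagrange--d'Alembert principle with the constrained variations \refeqn{delR} and \refeqn{delqi}, integration by parts in time, and projection of the $\Sph^2$ stationarity condition by $\hat{q}_{ij}$ (equivalently by the projector $-\hat{q}_{ij}^2$) to eliminate the cable tension, followed by the same bookkeeping into $\Nb\ddot{X}=\Pb$. The only quibble is a sign in your side identity --- since $q_{ij}\cdot\ddot{q}_{ij}=-\|\dot{q}_{ij}\|^2$ the correct relation is $-\hat{q}_{ij}^{2}\ddot{q}_{ij}=\ddot{q}_{ij}+\|\dot{q}_{ij}\|^{2}q_{ij}$ --- but this does not affect the argument.
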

\begin{proof}
See Appendix~\ref{sec:PfFDM}
\end{proof}
These equations are derived directly on a nonlinear manifold without any simplification. The dynamics of the payload, flexible cables, and quadrotors are considered explicitly, and they avoid singularities and complexities associated to local coordinates.

\section{CONTROL SYSTEM DESIGN FOR SIMPLIFIED DYNAMIC MODEL}

\subsection{Control Problem Formulation}

Let $x_{0_{d}}\in\Re^{3}$ be the desired position of the payload. The desired attitude of the payload is considered as $R_{0_d}=I_{3\times 3}$, and the desired direction of links is aligned along the vertical direction. The corresponding location of the $i$-th quadrotor at this desired configuration is given by
\begin{align}
x_{i_d}=x_{0_d}+\rho_{i}-\sum_{a=1}^{n_{i}}{l_{ia}e_{3}}.
\end{align}
We wish to design control forces $f_{i}$ and control moments $M_{i}$ of quadrotors such that this desired configuration becomes asymptotically stable.

\subsection{Simplified Dynamic Model}

Control forces for each quadrotor is given by $-f_{i}R_{i}e_{3}$ for the given equations of motion \refeqn{EOMM1}, \refeqn{EOMM2}, \refeqn{EOMM3}, \refeqn{EOMM4}. As such, the quadrotor dynamics is underactuated. The total thrust magnitude of each quadrotor can be arbitrary chosen, but the direction of the thrust vector is always along the third body fixed axis, represented by $R_ie_3$. But, the rotational attitude dynamics of the quadrotors are fully actuated, and they are not affected by the translational dynamics of the quadrotors or the dynamics of links. 

Based on these observations, in this section, we simplify the model by replacing the $-f_{i}R_{i}e_{3}$ term by a fictitious control input $u_{i}\in\Re^{3}$, and design an expression for $u$ to asymptotically stabilize the desired equilibrium. In another words, we assume that the attitude of the quadrotor can be instantaneously changed. The effects of the attitude dynamics are studied at the next section.

\subsection{Linear Control System}

The control system for the simplified dynamic model is developed based on the linearized equations of motion. At the desired equilibrium, the position and the attitude of the payload are given by $x_{0_d}$ and $R_{0}^{*}=I_{3}$, respectively, where the superscript $*$ denotes the value of a variable at the desired equilibrium throughout this paper. Also, we have $q_{ij}^{*}=e_{3}$ and $R_{i}^{*}=I_{3}$. In this equilibrium configuration, the control input for the $i$-th quadrotor is
\begin{align}
u_{i}^{*}=-f_{i}^{*}R_{i}^{*}e_{3},
\end{align}
where the total thrust is $f_{i}^{*}=(M_{iT}+\frac{m_{0}}{n})g$.

The variation of $x_{0}$ is given by
\begin{align}\label{eqn:xlin}
\delta x_{0}=x_{0}-x_{0_{d}},
\end{align}
and the variation of the attitude of the payload is defined as
\begin{align*}
\delta R_0 = R_0^* \hat\eta_0 = \hat\eta_0,
\end{align*}
for $\eta_0\in\Re^3$. The variation of $q_{ij}$ can be written as
\begin{align}\label{eqn:qlin}
\delta q_{ij}=\xi_{ij}\times e_{3},
\end{align}
where $\xi_{ij}\in\Re^{3}$ with $\xi_{ij}\cdot e_{3}=0$. The variation of $\omega_{ij}$ is given by $\delta \omega_{ij}\in\Re^{3}$ with $\delta\omega_{ij}\cdot e_{3}=0$. Therefore, the third element of each of $\xi_{ij}$ and $\delta\omega_{ij}$ for any equilibrium configuration is zero, and they are omitted in the following linearized equations. The state vector of the linearized equation is composed of $C^{T}\xi_{ij}\in\Re^{2}$, where $C=[e_{1},\;e_{2}]\in\Re^{3\times 2}$. The variation of the control input $\delta u_{i}\in\Re^{3\times 1}$, is given as $\delta u_i = u_i-u_i^*$.

\begin{prop}\label{prop:stability1}
The linearized equations of the simplified dynamic model are given by 
\begin{align}\label{eqn:EOMLin}
\Mb\ddot \xb  + \Gb\xb = \Bb \delta u,
\end{align}
where the state vector $\xb\in\Re^{D_{\xb}}$ with $D_{\xb}=6+2\sum_{i=1}^{n}n_{i}$ is given by
\begin{align*}
\xb=\begin{bmatrix}
\delta{x}_{0},\eta_{0},C^{T}{\xi}_{1j},C^T{\xi}_{2j},\cdots,C^{T}{\xi}_{nj}\\
\end{bmatrix},
\end{align*}
and $\delta u=[\delta u_{1}^T,\; \delta u_{2}^T,\;\cdots,\;\delta u_{n}^T]^{T}\in\Re^{3n\times 1}$. The matrix $\Mb\in\Re^{D_{\xb}\times D_{\xb}}$ are defined as
\begin{align*}\Mb=
\begin{bmatrix}
M_{T}I_{3}&\Mb_{x_{0}\Omega_{0}}&\Mb_{x_{0}1}&\Mb_{x_{0}2}&\cdots&\Mb_{x_{0}n}\\
\Mb_{\Omega_{0} x_{0}}&\bar{J}_{0}&\Mb_{\Omega_{0}1}&\Mb_{\Omega_{0}2}&\cdots&\Mb_{\Omega_{0}n}\\
\Mb_{1 x_{0}}&\Mb_{1\Omega_{0}}&\Mb_{qq1}&0&\cdots&0\\
\Mb_{2 x_{0}}&\Mb_{2\Omega_{0}}&0&\Mb_{qq2}&\cdots&0\\
\vdots&\vdots&\vdots&\vdots&\vdots&\vdots\\
\Mb_{n x_{0}}&\Mb_{n\Omega_{0}}&0&0&\cdots&\Mb_{qqn}
\end{bmatrix},
\end{align*}
where the sub-matrices are defined as
\begin{gather}
\Mb_{x_{0}\Omega_{0}}=-\sum_{i=1}^{n}{M_{iT}\hat{\rho}_{i}};\; \Mb_{\Omega_{0} x_{0}}=\Mb_{x_{0}\Omega_{0}}^{T},\nonumber\\
\Mb_{x_{0}i}=[M_{0i1}l_{i1}\hat{e}_{3}C,\; M_{0i2}l_{i2}\hat{e}_{3}C,\; \cdots,\;M_{0in_{i}}l_{in_{i}}\hat{e}_{3}C],\nonumber\\
\Mb_{\Omega_{0}i}=[M_{0i1}l_{i1}\hat{\rho}_{i}C,\; M_{0i2}l_{i2}\hat{\rho}_{i}C,\; \cdots,\; M_{0in_{i}}l_{in_{i}}\hat{\rho}_{i}C],\nonumber\\
\Mb_{ix_{0}}=-[M_{0i1}C^{T}\hat{e}_{3},\; M_{0i2}C^{T}\hat{e}_{3},\; \cdots,\; M_{0in_{i}}C^{T}\hat{e}_{3}],\\
\Mb_{i\Omega_{0}}=[M_{0i1}C^{T}\hat{e}_{3}\hat{\rho}_{i},\; M_{0i2}C^{T}\hat{e}_{3}\hat{\rho}_{i},\;\cdots,\; M_{0in_{i}}C^{T}\hat{e}_{3}\hat{\rho}_{i}],
\end{gather}
and the sub-matrix $\Mb_{qqi}\in\Re^{2n_i\times 2n_i}$ is given by
\begin{align}\Mb_{qqi}=
\begin{bmatrix}
M_{i11}l_{i1}I_{2}&M_{i12}l_{i2}I_{2}&\cdots&M_{i1n_{i}}l_{in_{i}}I_{2}\\
M_{i21}l_{i1}I_{2}&M_{i22}l_{i2}I_{2}&\cdots&M_{i2n_{i}}l_{in_{i}}I_{2}\\
\vdots&\vdots&&\vdots\\
M_{in_{i}1}l_{i1}I_{2}&M_{in_{i}2}l_{i2}I_{2}&\cdots&M_{in_{i}n_{i}}l_{in_{i}}I_{2}
\end{bmatrix}.
\end{align}
The matrix $\Gb\in\Re^{D_{\xb}\times D_{\xb}}$ is defined as
\begin{align*}
\Gb=\begin{bmatrix}
0&0&0&0&0&0\\
0&\Gb_{\Omega_{0}\Omega_{0}}&0&0&0&0\\
0&0&\Gb_{1}&0&0&0\\
0&0&0&\Gb_{2}&0&0\\
\vdots&\vdots&\vdots&\vdots&\vdots&\vdots\\
0&0&0&0&0&\Gb_{n}
\end{bmatrix},
\end{align*}
where $\Gb_{\Omega_{0}\Omega_{0}}=\sum_{i=1}^{n}\frac{m_{0}}{n}g\hat{\rho}_{i}\hat{e}_{3}$ and the sub-matrices $\Gb_{i}\in\Re^{2n_{i}\times 2n_{i}}$ are
\begin{align*}
\Gb_{i}= \diag[(-M_{iT}-\frac{m_{0}}{n}+M_{0ij})ge_{3}I_{2}].
\end{align*}
The matrix $\Bb\in\Re^{D_{\xb}\times 3n}$ is given by 
\begin{align*}
\Bb=\begin{bmatrix}
I_{3}&I_{3}&\cdots&I_{3}\\
\hat{\rho}_{1}&\hat{\rho}_{2}&\cdots&\hat{\rho}_{n}\\
\Bb_{\Bb}&0&0&0\\
0&\Bb_{\Bb}&0&0\\
\vdots&\vdots&\vdots&\vdots\\
0&0&0&\Bb_{\Bb}
\end{bmatrix},
\end{align*}
where $\Bb_{\Bb}=-[C^{T}\hat{e}_{3},\; C^{T}\hat{e}_{3},\; \cdots,\; C^{T}\hat{e}_{3}]^{T}$.
\end{prop}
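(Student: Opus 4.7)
The plan is to linearize the nonlinear Euler--Lagrange equations \refeqn{EOMM1}--\refeqn{EOMM3} of Proposition~\ref{prop:FDM} about the hanging equilibrium $(x_0,R_0,q_{ij},\Omega_0,\omega_{ij})=(x_{0_d},I_3,e_3,0,0)$ with $u_i=u_i^*$, and then to collect first-order terms into the block structure claimed for $\Mb$, $\Gb$, and $\Bb$. Equation \refeqn{EOMM4} is irrelevant in this simplified model since $-f_iR_ie_3$ is replaced by the fictitious input $u_i$. First I would verify the equilibrium: equating all velocities and accelerations to zero in \refeqn{EOMM1} forces $\sum_i u_i^*=-M_Tge_3$, and the symmetric choice $u_i^*=-(M_{iT}+m_0/n)ge_3$ reproduces the announced $f_i^*$. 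Equation \refeqn{EOMM3} at equilibrium reduces to $-\hat e_3^2(M_{0ij}ge_3-f_i^*e_3)=0$, which is automatic since both vectors lie along $e_3$, while \refeqn{EOMM2} closes under the implicit symmetry hypothesis $\sum_i\hat\rho_ie_3=0$ on the attachment points.

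The next step is substituting the first-order expansions $R_0=I_3+\hat\eta_0$, $\Omega_0=\dot\eta_0$, $q_{ij}=e_3+\xi_{ij}\times e_3$ with $\xi_{ij}\cdot e_3=0$ (so that $\delta q_{ij}=-\hat e_3\, CC^T\xi_{ij}$ via $\xi_{ij}=CC^T\xi_{ij}$), $x_0=x_{0_d}+\delta x_0$, and $u_i=u_i^*+\delta u_i$ into \refeqn{EOMM1}--\refeqn{EOMM3}, discarding every quadratic term. In particular $\hat\Omega_0^2$, $\hat\Omega_0\bar{J}_0\Omega_0$, and $\|\dot q_{ij}\|^2q_{ij}$ drop out. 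The linearized \refeqn{EOMM1} then yields the first block-row, recovering the top block of $\Mb$ and of $\Bb$; an analogous substitution in \refeqn{EOMM2} gives the second block-row. For \refeqn{EOMM3}, the factor $\hat q_{ij}^2$ evaluates at the equilibrium to $-CC^T$, so the equation naturally lives in the 2-D tangent plane: left-multiplying by $C^T$ and using $C^T\hat e_3^2 C=-I_2$ collapses the $3$-vector equation to a $2$-vector identity in $C^T\xi_{ij}$, producing $\Mb_{qqi}$ together with the $(i,x_0)$ and $(i,\Omega_0)$ coupling blocks and the link rows of $\Bb$. Assembling the three block-rows in the state order $(\delta x_0,\eta_0,C^T\xi_{1j},\ldots,C^T\xi_{nj})$ reconstructs $\Mb$ and $\Bb$.

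The stiffness matrix $\Gb$ comes entirely from the gravity and equilibrium-thrust terms, since all velocity-dependent contributions are second order about $\Omega_0^*=0$, $\omega_{ij}^*=0$. The two key computations are: in \refeqn{EOMM2}, the variation of $\sum_i\hat\rho_i R_0^T(M_{iT}ge_3+u_i)$ through $\delta R_0^T=-\hat\eta_0$ and $\delta u_i$ leaves, after substituting $u_i^*=-(M_{iT}+m_0/n)ge_3$, a residual of the form $\sum_i(m_0/n)g\,\hat\rho_i\hat e_3\eta_0$ (the $M_{iT}g$ pieces cancel between the cable weight and the equilibrium thrust), which identifies $\Gb_{\Omega_0\Omega_0}$; and in \refeqn{EOMM3}, the variation of $-\hat q_{ij}^2(M_{0ij}ge_3-f_i^*e_3)$ after the $C^T$-projection contributes a factor $(M_{0ij}-M_{iT}-m_0/n)g$ times $I_2$ on $C^T\xi_{ij}$, giving the diagonal $\Gb_i$. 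The principal obstacle is organizational rather than conceptual: tracking signs and $\hat e_3$, $\hat\rho_i$, $CC^T$ products consistently across three coupled block-equations while cleanly separating the equilibrium (zero-order) contribution, which must vanish identically so as not to contaminate $\Gb$, from the genuine first-order correction being computed.
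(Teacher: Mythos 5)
Your proposal follows essentially the same route as the paper's proof: linearize \refeqn{EOMM1}--\refeqn{EOMM3} about the hanging equilibrium using $\delta R_0=\hat\eta_0$, $\delta q_{ij}=\xi_{ij}\times e_3$, project the link equations onto the tangent plane via $C$ with the identity $C^T\hat e_3^2C=-I_2$, and collect first-order terms into $\Mb$, $\Gb$, $\Bb$; your equilibrium verification (including the condition $\sum_i\hat\rho_ie_3=0$ on the attachment points, which the paper leaves implicit) and the gravity/thrust cancellations yielding $\Gb_{\Omega_0\Omega_0}$ and $\Gb_i$ are correct. The one caveat is that applying $C^T$ to the $\hat q_{ij}^2$ form of \refeqn{EOMM3}, as you describe, produces the paper's third block-rows left-multiplied by the invertible rotation $C^T\hat e_3 C$ rather than the stated blocks themselves (e.g.\ $\Mb_{ix_0}$ carries a single $\hat e_3$ and $\Gb_i$ is proportional to $I_2$), so to land on the proposition's literal entries you should project the equivalent single-$\hat q_{ij}$ form of the link equation instead; this changes nothing of substance since the two systems differ by an invertible left factor.
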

\begin{proof}
See Appendix~\ref{sec:P1stability}
\end{proof}
We present the following PD-type control system for the linearized dynamics
\begin{align}
\delta u_{i}
=&-K_{x_{i}}\xb-K_{\dot{x}_{i}}\dot{\xb},
\end{align}
for controller gains $K_{x_{i}},K_{\dot{x}_{i}}\in\Re^{3\times D_{\xb}}$. Provided that \refeqn{EOMLin} is controllable, we can choose the combined controller gains $K_{x}=[K_{x_{1}}^T,\,\ldots\;K_{x_{n}}^T]^{T}$, $K_{\dot{x}}=[K_{\dot{x}_{1}}^T,\,\ldots K_{\dot{x}_{n}}^T]^{T}\in\Re^{3n\times D_{\xb}}$ such that the equilibrium is asymptotically stable for the linearized equation \refeqn{EOMLin}. 

\section{CONTROL SYSTEM DESIGN FOR THE FULL DYNAMIC MODEL}
The control system designed at the previous section is based on a simplifying assumption that each quadrotor can generates a thrust along any direction. 
In the full dynamic model, the direction of the thrust for each quadrotor is parallel to its third body-fixed axis always. In this section, the attitude of each quadrotor is controlled such that the third body-fixed axis becomes parallel to the direction of the ideal control force designed in the previous section. The central idea is that the attitude $R_{i}$ of the quadrotor is controlled such that its total thrust direction $-R_{i}e_{3}$, corresponding to the third body-fixed axis, asymptotically follows the direction of the fictitious control input $u_{i}$. By choosing the total thrust magnitude properly, we can guarantee asymptotical stability for the full dynamic model. 

Let $A_{i}\in\Re^{3}$ be the ideal total thrust of the $i$-th quadrotor that asymptotically stabilize the desired equilibrium. Therefor, we have
\begin{align}\label{eqn:Ai}
A_{i}=u_{i}^{*}+\delta u_{i}=-K_{x_{i}}\xb-K_{\dot{x}_{i}}\dot{\xb}+u_{i}^{*},
\end{align} 
where $f_{i}^{*}$ and $u_{i}^{*}$ are the total thrust and control input of each quadrotor at its equilibrium respectively. 

From the desired direction of the third body-fixed axis of the $i$-th quadrotor, namely $b_{3_{i}}\in\Sph^2$, is given by
\begin{align}
b_{3_{i}}=-\frac{A_{i}}{\|A_{i}\|}.
\end{align}
This provides a two-dimensional constraint on the three dimensional desired attitude of each quadrotor, such that there remains one degree of freedom. To resolve it, the desired direction of the first body-fixed axis $b_{1_{i}}(t)\in\Sph^2$ is introduced as a smooth function of time. Due to the fact that the first body-fixed axis is normal to the third body-fixed axis, it is impossible to follow an arbitrary command $b_{1_{i}}(t)$ exactly. Instead, its projection onto the plane normal to $b_{3_{i}}$ is followed, and the desired direction of the second body-fixed axis is chosen to constitute an orthonormal frame~\cite{LeeLeoAJC13}. More explicitly, the desired attitude of the $i$-th quadrotor is given by
\begin{align}
R_{i_{c}}=\begin{bmatrix}
-\frac{(\hat{b}_{3_{i}})^{2}b_{1_{i}}}{\|(\hat{b}_{3_{i}})^{2}b_{1_{i}}\|} & \frac{\hat{b}_{3_{i}}b_{1_{i}}}{\|\hat{b}_{3_{i}}b_{1_{i}}\|} & b_{3_{i}}\end{bmatrix},
\end{align}
which is guaranteed to be an element of $\so$. The desired angular velocity is obtained from the attitude kinematics equation, $\Omega_{i_{c}}=(R_{i_{c}}^{T}\dot{R}_{i_{c}})^\vee\in\Re^{3}$.
\begin{figure}
\centerline{
	\subfigure[Payload position ($x_0$:blue, $x_{0_{d}}$:red)]{
		\includegraphics[width=0.55\columnwidth]{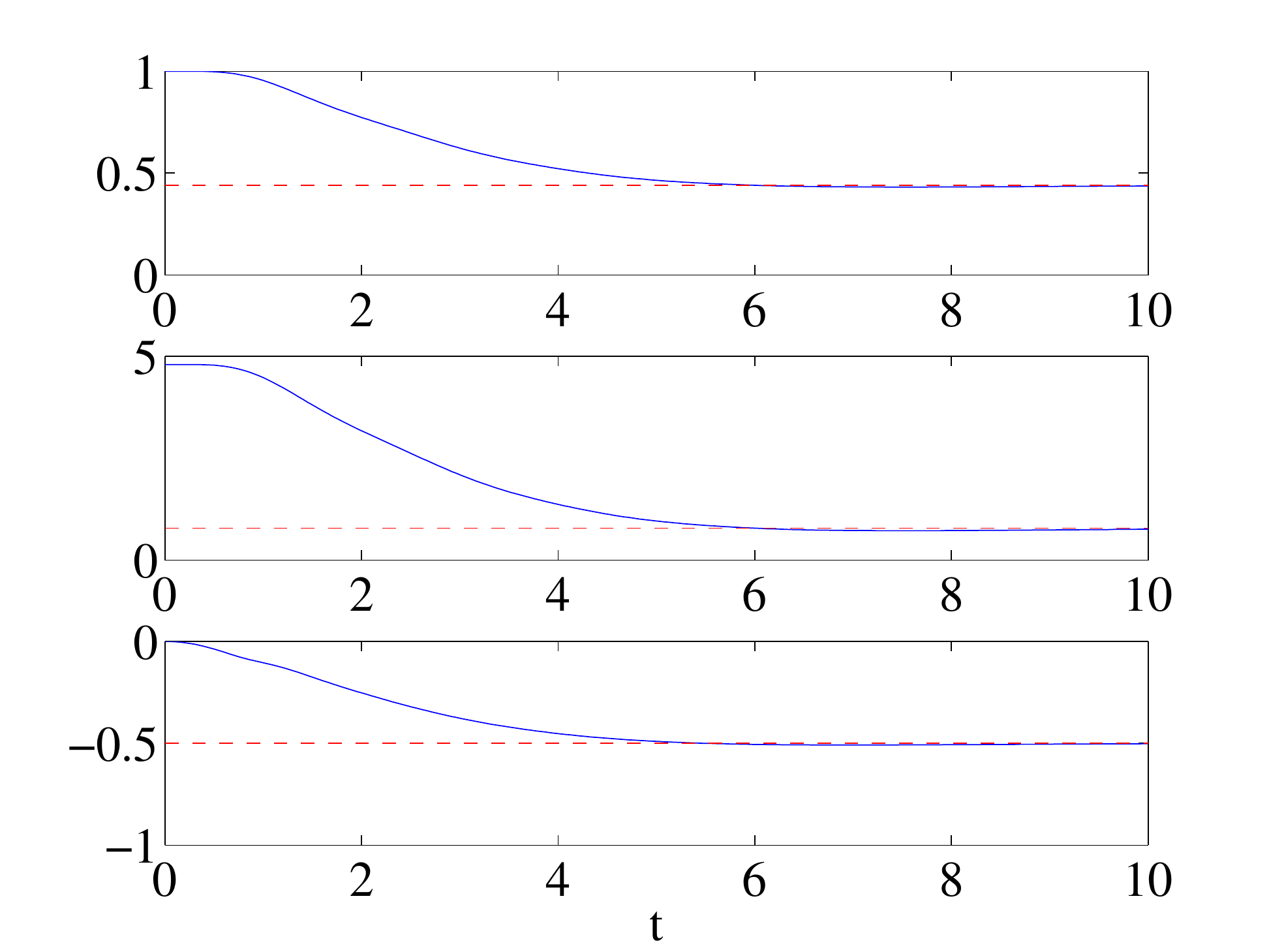}}
	\subfigure[Payload velocity ($v_0$:blue, $v_{0_{d}}$:red)]{
		\includegraphics[width=0.55\columnwidth]{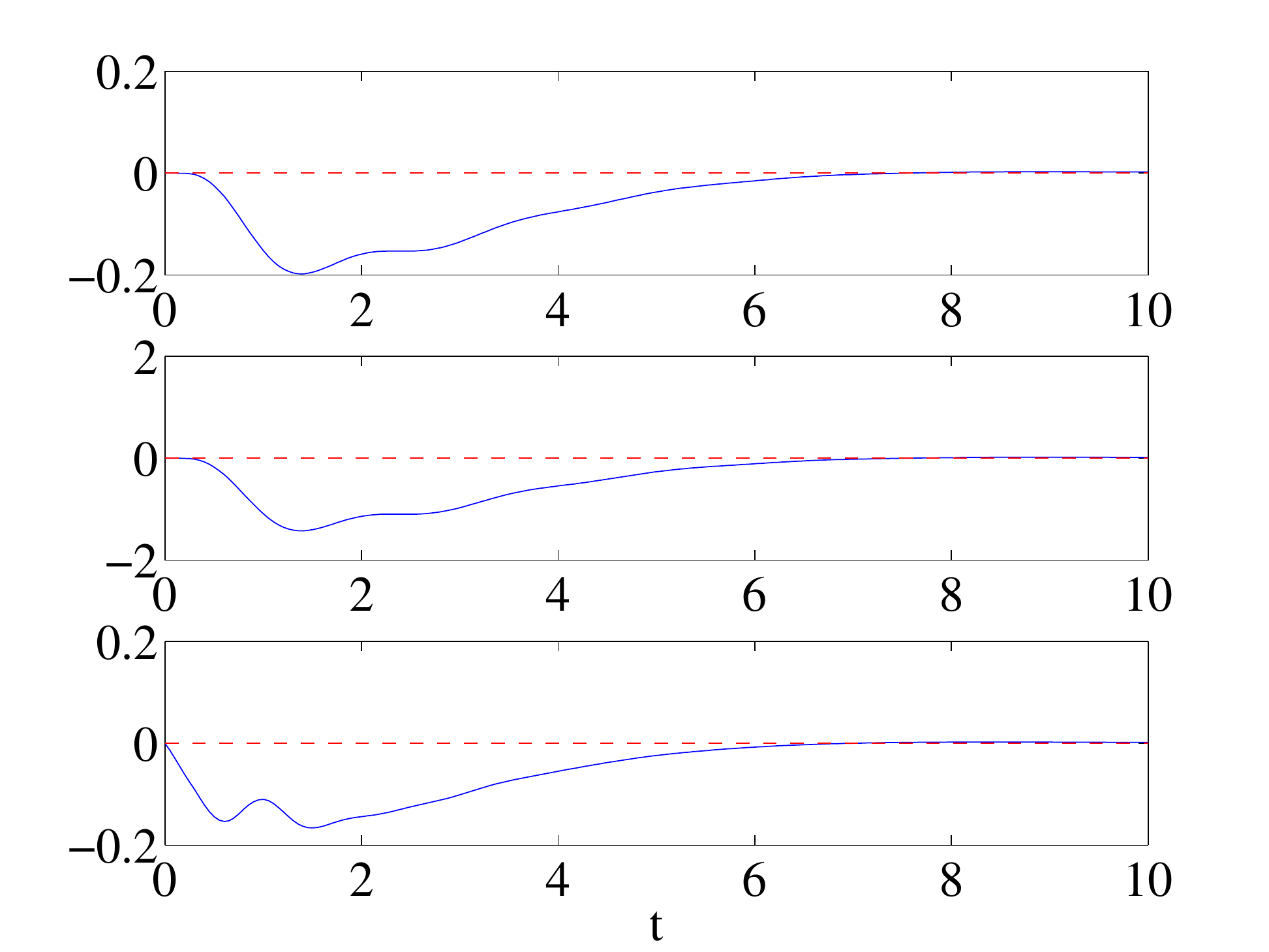}}
}
\centerline{
	\subfigure[Payload angular velocity $\Omega_{0}$]{
		\includegraphics[width=0.55\columnwidth]{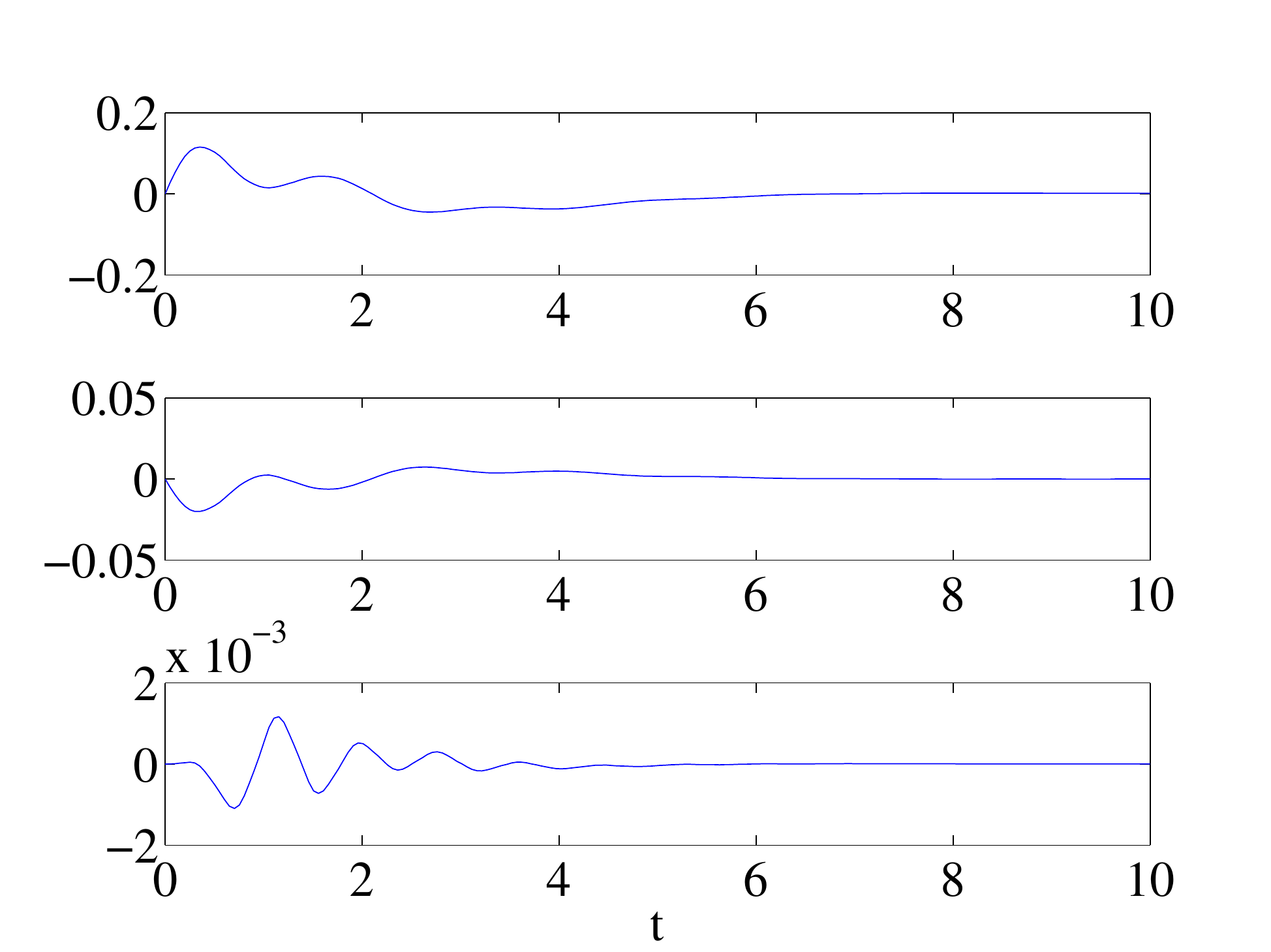}}
	\subfigure[Quadrotors angular velocity errors $e_{\Omega_{i}}$]{
		\includegraphics[width=0.55\columnwidth]{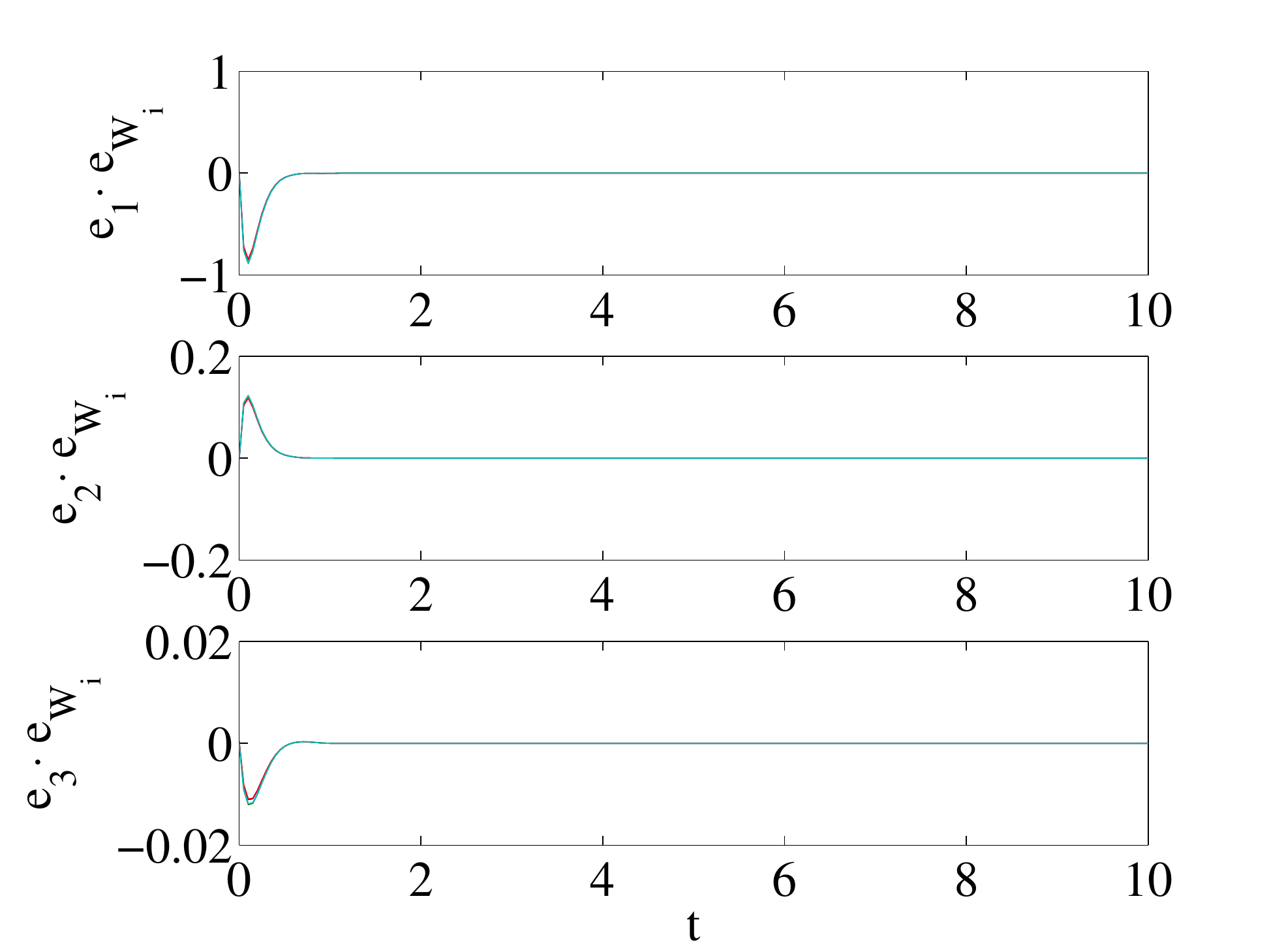}}
}
\centerline{
	\subfigure[Payload attitude error $\Psi_{0}$]{
		\includegraphics[width=0.55\columnwidth]{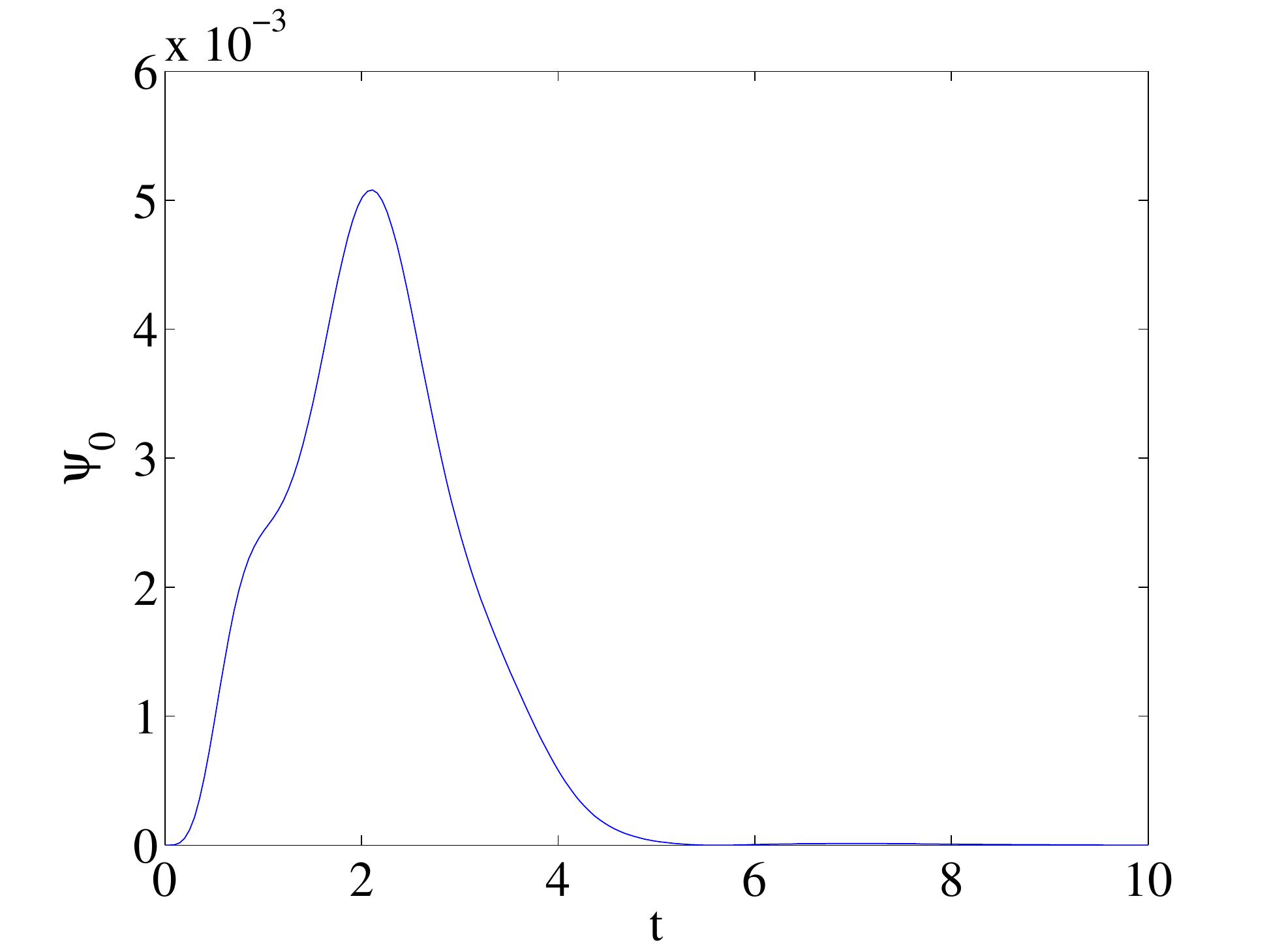}}
	\subfigure[Quadrotors attitude errors $\Psi_{i}$]{
		\includegraphics[width=0.55\columnwidth]{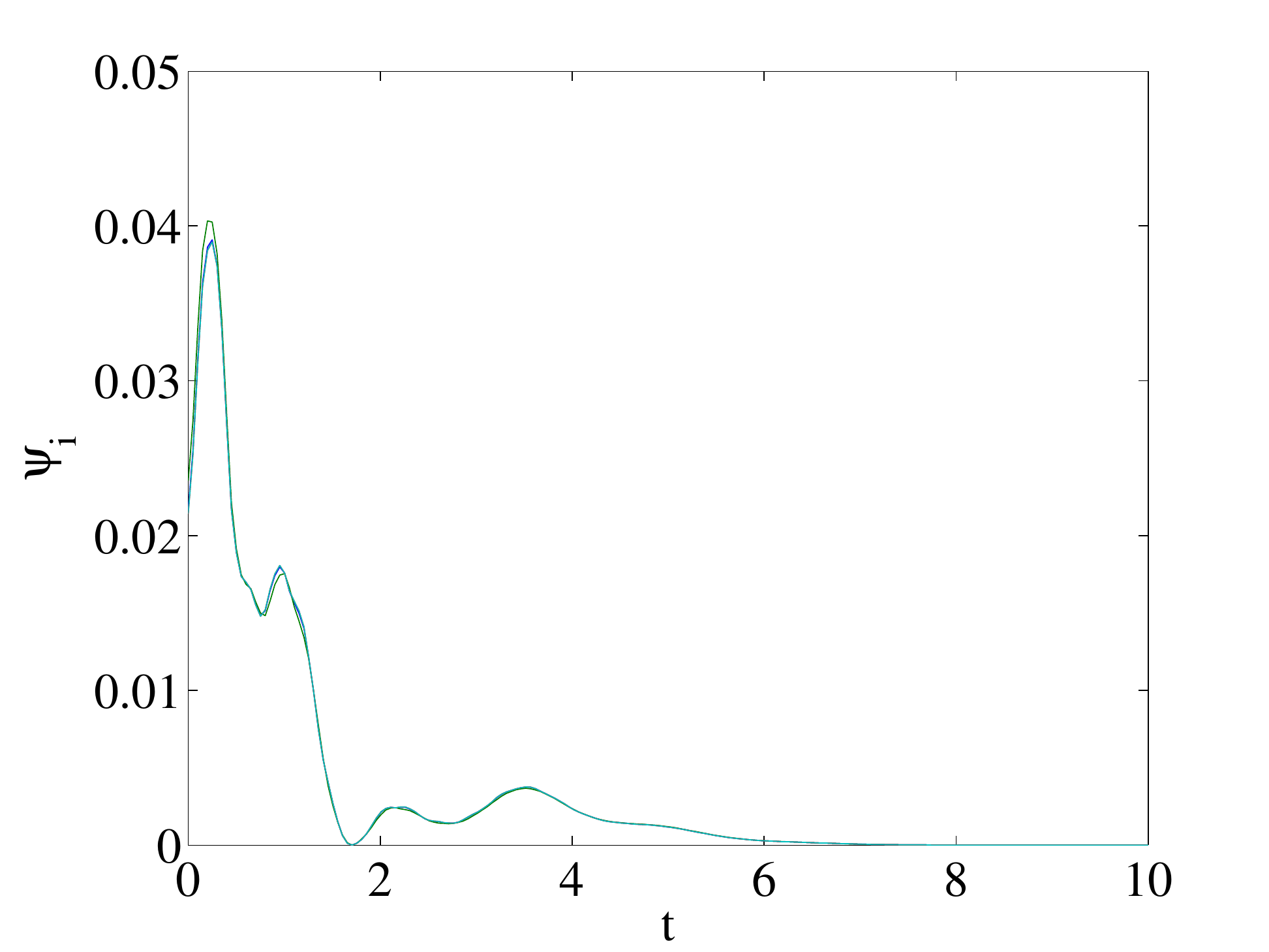}}
}
\centerline{
	\subfigure[Quadrotors total thrust inputs $f_{i}$]{
		\includegraphics[width=0.55\columnwidth]{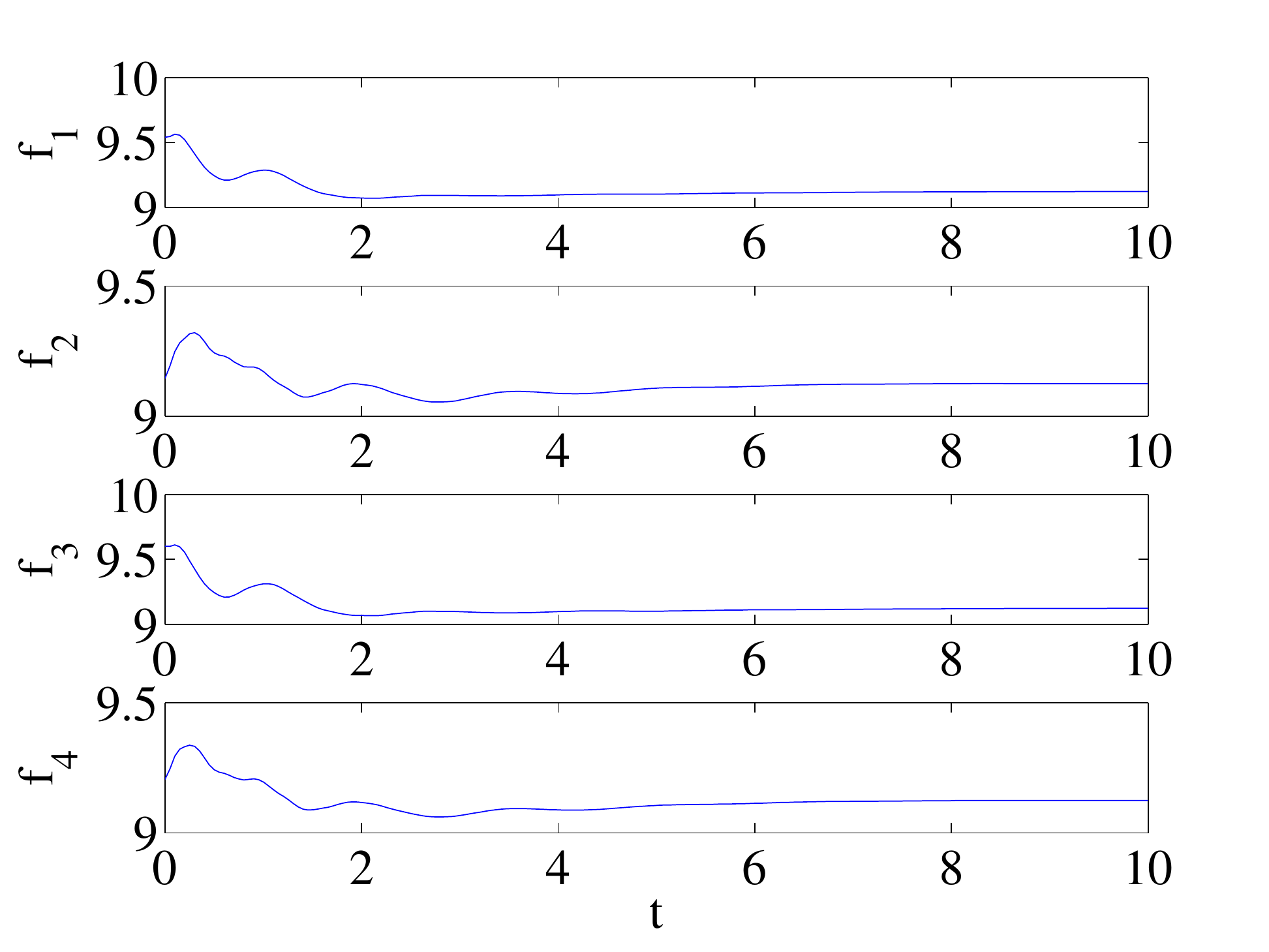}}
	\subfigure[Direction error $e_{q}$, and angular velocity error $e_{\omega}$ for the links]{
		\includegraphics[width=0.55\columnwidth]{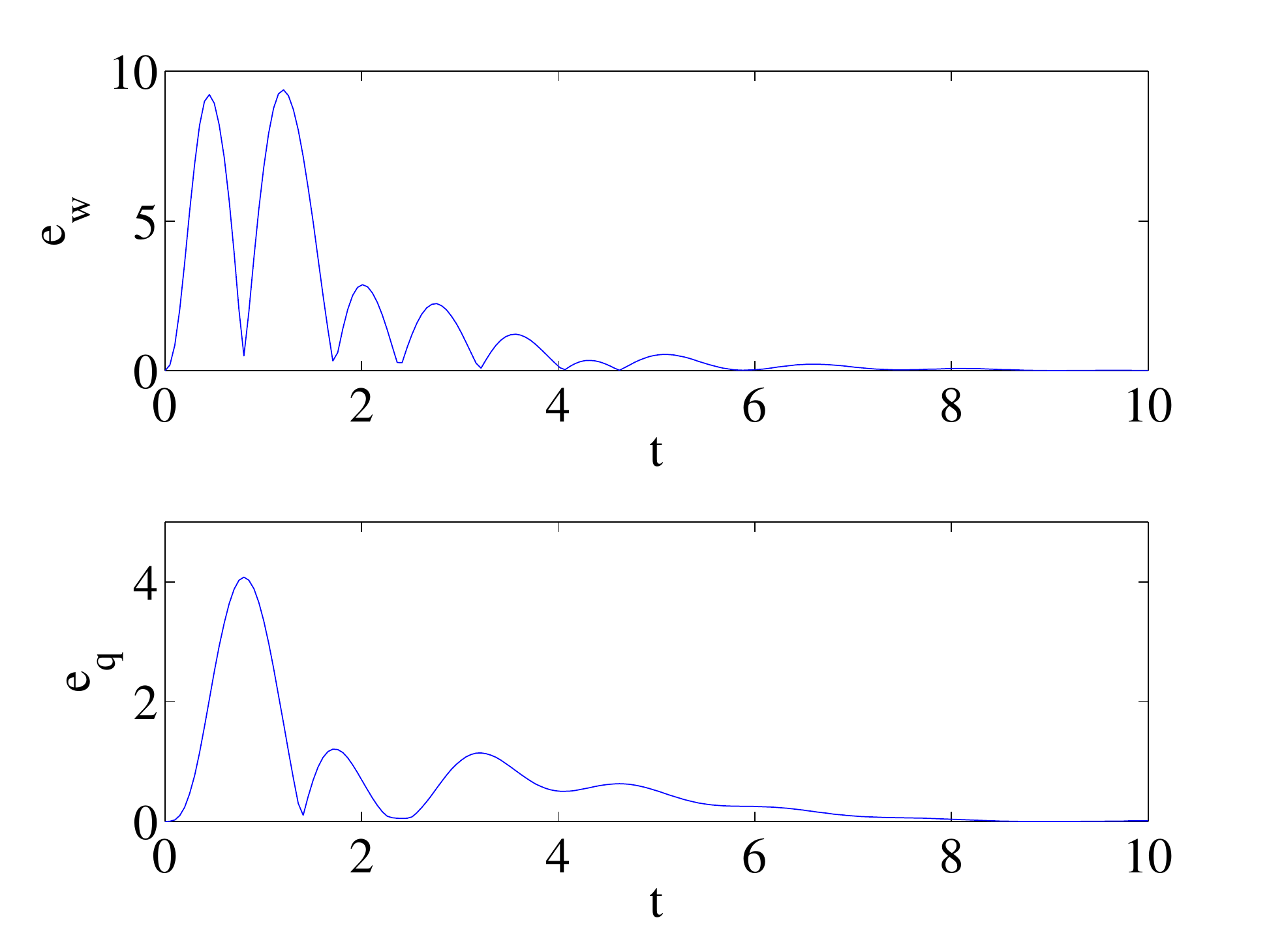}}
}
\caption{Stabilization of a rigid-body connected to multiple quadrotors}\label{fig:simresults1}
\end{figure}
Define the tracking error vectors for the attitude and the angular velocity of the $i$-th quadrotor as
\begin{align}
e_{R_{i}}=\frac{1}{2}(R_{i_{c}}^{T}R_{i}-R_{i}^{T}R_{i_{c}})^{\vee},\; e_{\Omega_{i}}=\Omega_{i}-R_{i}^{T}R_{i_{c}}\Omega_{i_{c}},
\end{align}
and a configuration error function on $\SO$ as follows
\begin{align}
\Psi_{i}= \frac{1}{2}\trs[I- R_{{i}_c}^T R_{i}].
\end{align}
The thrust magnitude is chosen as the length of $u_{i}$, projected on to $-R_{i}e_{3}$, and the control moment is chosen as a tracking controller on $\SO$:
\begin{align}
f_{i}=&-A_{i}\cdot R_{i}e_{3},\label{eqn:fi}\\
M_{i}=&-k_R e_{R_{i}} -k_\Omega e_{\Omega_{i}}\nonumber\\
&+(R_{i}^TR_{c_i}\Omega_{c_{i}})^\wedge J_{i} R_{i}^T R_{c_i} \Omega_{c_i} + J_{i} R_{i}^T R_{c_i}\dot\Omega_{c_i},\label{eqn:Mi}
\end{align}
where $k_{R}$ and $k_{\Omega}$ are positive constants.

Stability of the corresponding controlled systems for the full dynamic model can be studied by showing the the error due to the discrepancy between the desired direction $b_{3_{i}}$ and the actual direction $R_{i}e_{3}$. This stability is shown via a Lyapunov analysis.
\begin{prop}\label{prop:stability}
Consider the full dynamic model defined by \refeqn{EOMM1}, \refeqn{EOMM2}, \refeqn{EOMM3}, \refeqn{EOMM4}. For the command $x_{0_{d}}$ and the desired direction of the first body-fixed axis $b_{1_{i}}$, control inputs for quadrotors are designed as \refeqn{fi} and \refeqn{Mi}. Then, the equilibrium of zero tracking errors for $e_{x_{0}},\; \dot{e}_{x_{0}},\; e_{R_0},\;e_{\Omega_{0}},\; e_{q_{ij}},\; e_{\omega_{ij}},\; e_{R_i},\;e_{\Omega_{i}}$, is exponentially stable.
\end{prop}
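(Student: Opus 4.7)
The plan is to treat the full dynamic model as a perturbation of the simplified dynamic model analyzed in Proposition~\ref{prop:stability1}, where the perturbation is driven by the quadrotor attitude tracking error, and then close the loop using a standard cascade/composite Lyapunov argument.

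First, I would decompose the actual thrust acting on the $i$-th quadrotor as
\begin{align*}
-f_{i}R_{i}e_{3}=A_{i}+\Delta_{i},
\end{align*}
where $A_{i}$ is the ideal fictitious control from \refeqn{Ai} and $\Delta_{i}=-(A_{i}\cdot R_{i}e_{3})R_{i}e_{3}-A_{i}$ is the discrepancy caused by $R_{i}e_{3}$ not being exactly aligned with $-A_{i}/\|A_{i}\|$. A direct computation using $b_{3_{i}}=-A_{i}/\|A_{i}\|$ shows that $\|\Delta_{i}\|$ can be bounded in terms of the attitude configuration error $\Psi_{i}$, and more precisely, $\|\Delta_{i}\|\le c_{1}\|A_{i}\|\,\|e_{R_{i}}\|$ for some constant $c_{1}>0$ whenever $\Psi_{i}$ is bounded away from $2$. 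Substituting into the equations of motion of Proposition~\ref{prop:FDM}, the dynamics for $(\delta x_{0},\eta_{0},C^{T}\xi_{ij})$ become the linearized closed loop system of Proposition~\ref{prop:stability1} with PD feedback $\delta u_{i}=-K_{x_{i}}\xb-K_{\dot x_{i}}\dot\xb$, plus a perturbation term that is linear in $\Delta_{i}$ and hence vanishes with $e_{R_{i}}$.

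Second, I would invoke the geometric attitude tracking result on $\SO$ from~\cite{LeeLeoPICDC10,LeeLeoAJC13}. With the moment law \refeqn{Mi} and positive gains $k_{R},k_{\Omega}$, the standard configuration error function $\Psi_{i}$ together with the Lyapunov function
\begin{align*}
\mathcal{V}_{R_{i}}=\frac{1}{2}e_{\Omega_{i}}\cdot J_{i}e_{\Omega_{i}}+k_{R}\Psi_{i}+c_{2}\,e_{R_{i}}\cdot J_{i}e_{\Omega_{i}},
\end{align*}
yields exponential convergence of $(e_{R_{i}},e_{\Omega_{i}})$ in a sublevel set $\Psi_{i}<\psi^{*}<2$, provided $\dot\Omega_{i_c}$ is bounded. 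Boundedness of $\Omega_{i_c}$ and $\dot\Omega_{i_c}$ follows from smoothness of $A_{i}$ as a function of the state plus the assumption of a smooth $b_{1_{i}}(t)$ command, as in the author's earlier work.

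Third, I would combine the two pieces into a composite Lyapunov function $\mathcal{V}=\mathcal{V}_{L}+\sum_{i=1}^{n}\mathcal{V}_{R_{i}}$, where $\mathcal{V}_{L}$ is a quadratic Lyapunov function for the linearized closed-loop system \refeqn{EOMLin} with $\delta u=-K_{x}\xb-K_{\dot x}\dot\xb$ (which exists since $K_{x},K_{\dot x}$ are chosen to make that linear system Hurwitz). The time derivative of $\mathcal{V}_{L}$ along the full dynamics contains a good negative-definite quadratic term in $(\xb,\dot\xb)$ plus cross terms bounded by $c_{3}\|(\xb,\dot\xb)\|\sum_{i}\|e_{R_{i}}\|$. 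Using Young's inequality to split these cross terms and choosing the weights in $\mathcal{V}$ appropriately, $\dot{\mathcal V}$ can be made negative definite in all tracking errors in a neighborhood of the equilibrium, yielding exponential stability.

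The main obstacle is the last step: carefully choosing the weights of $\mathcal{V}_{L}$ and $\mathcal{V}_{R_{i}}$ so that the indefinite cross terms coming from $\Delta_{i}$ are dominated by the negative quadratic terms, while simultaneously respecting the sublevel-set restriction on $\Psi_{i}$ that is required for the $\SO$ part to be exponentially stable. This domination is only local, which is why the resulting statement is exponential stability rather than a global result; the technical work consists in producing explicit bounds on the perturbation and verifying that the resulting $2\times 2$ comparison matrix between the translational/link block and each attitude block is positive definite for sufficiently large $k_{R}$ and $k_{\Omega}$. Full details follow the template in~\cite{LeeLeoAJC13,Farhad2013,gooddaewontaeyoungacc14} and are relegated to~\cite{FarhadTLeeMultiple}.
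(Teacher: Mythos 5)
Your proposal follows essentially the same route as the paper's proof: your discrepancy term $\Delta_{i}$ is the paper's $X_{i}$ up to a sign slip (it should read $\Delta_{i}=(A_{i}\cdot R_{i}e_{3})R_{i}e_{3}-A_{i}$, bounded by $\|A_{i}\|\|e_{R_{i}}\|$), the attitude part uses the same \SO{} Lyapunov function with the cross term $c_{2}e_{R_{i}}\cdot J_{i}e_{\Omega_{i}}$, and the composite Lyapunov function with the $2\times 2$ comparison matrices is exactly the paper's closing step. The only point you gloss over is that the paper also explicitly carries the linearization remainder $\g(\xb,\dot{\xb})$ and absorbs it via $\|\g(\xb,\dot{\xb})\|\le\gamma\|z_{1}\|$ near the origin, which is where the locality of the result enters in addition to the sublevel-set restriction on $\Psi_{i}$.
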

\begin{proof}
See Appendix \ref{sec:Pstabilityddd}
\end{proof}

\section{NUMERICAL EXAMPLE}

We demonstrate the desirable properties of the proposed control system with numerical examples. Two cases are presented. At the first case, a payload is transported to a desired position from the ground. The second case considers stabilization of a payload with large initial attitude errors.

\subsection{Stabilization of the Rigid Body}
Consider four quadrotors $(n=4)$ connected via flexible cables  to a rigid body payload. Initial conditions are chosen as
\begin{gather*}
x_{0}(0)=[1.0,\; 4.8,\; 0.0]^{T}\,\mathrm{m},\; v_{0}(0)=0_{3\times 1},\\
q_{ij}(0)=e_{3},\; \omega_{ij}(0)=0_{3\times 1},\; R_{i}(0)=I_{3\times 3},\; \Omega_{i}(0)=0_{3\times 1}\\
R_{0}(0)=I_{3\times3},\; \Omega_{0}=0_{3\times 1}.
\end{gather*}
The desired position of the payload is chosen as
\begin{align}
x_{0_{d}}(t)=[0.44,\; 0.78,\; -0.5]^{T}\,\mathrm{m}.
\end{align}
The mass properties of quadrotors are chosen as
\begin{gather}
m_{i}=0.755\,\mathrm{kg},\nonumber\\ 
J_{i}=\diag[0.557,\; 0.557,\; 1.05]\times 10^{-2} \mathrm{kgm^2}.
\end{gather}
The payload is a box with mass $m_{0}=0.5\,\mathrm{kg}$, and its length, width, and height are $0.6$, $0.8$, and $0.2\,\mathrm{m}$, respectively. Each cable connecting the rigid body to the $i$-th quadrotor is considered to be $n_{i}=5$ rigid links. All the links have the same mass of $m_{ij}=0.01\,\mathrm{kg}$ and length of $l_{ij}=0.15\,\mathrm{m}$. Each cable is attached to the following points of the payload
\begin{gather*}
\rho_{1}=[0.3,\; -0.4,\; -0.1]^T\,\mathrm{m},\; \rho_{2}=[0.3,\; 0.4,\; -0.1]^T\,\mathrm{m},\\
\rho_{3}=[-0.3,\; -0.4,\; -0.1]^T\,\mathrm{m},\; \rho_{4}=[-0.3,\; 0.4,\; -0.1]^T\,\mathrm{m}.
\end{gather*}

Numerical simulation results are presented at Figure \ref{fig:simresults1}, which shows the position and velocity of the payload, and its tracking errors. We have also presented the link direction error defined as
\begin{align*}
e_{q}=\sum_{i=1}^{m}\sum_{j=1}^{n_{i}}{\|q_{ij}-e_{3}\|}.
\end{align*}

\begin{figure}
\centerline{
	\subfigure[3D perspective]{
		\includegraphics[width=0.9\columnwidth]{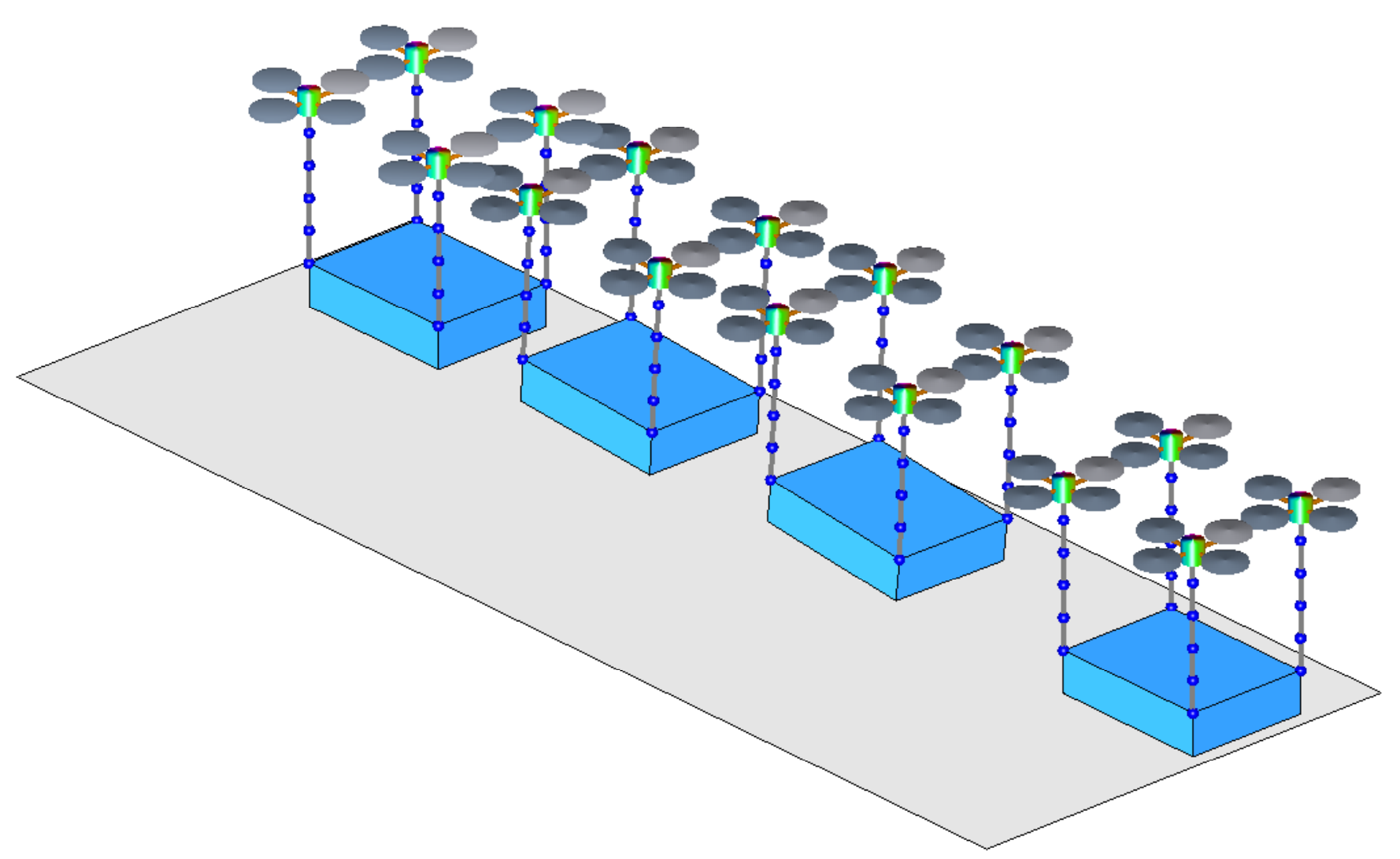}}
}
\centerline{
	\subfigure[Side view]{
		\includegraphics[width=0.9\columnwidth]{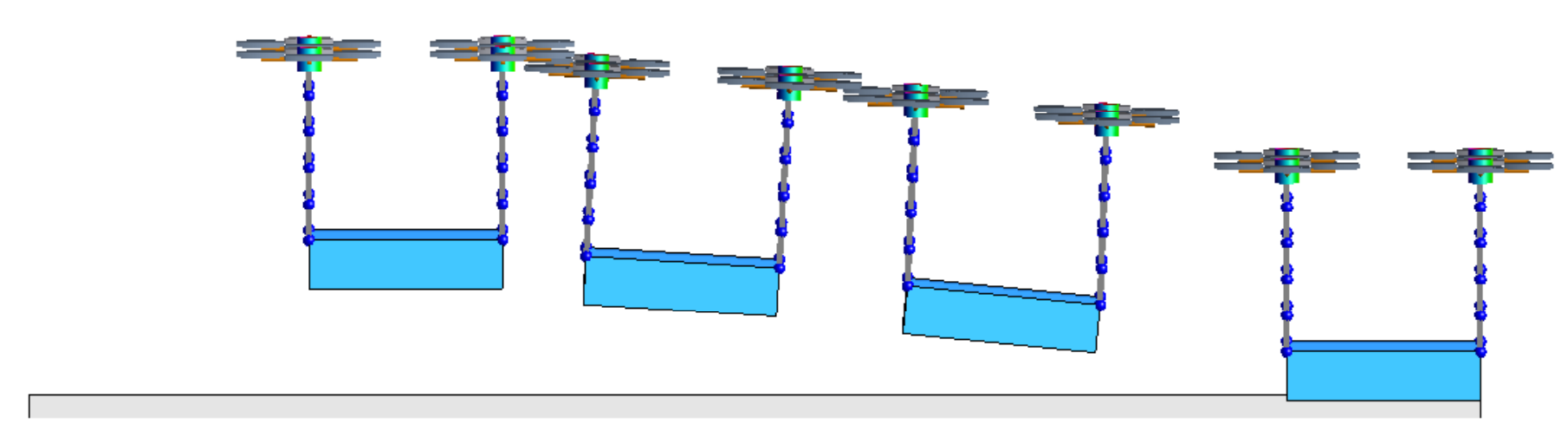}}
}
\centerline{
	\subfigure[Top view]{
		\includegraphics[width=0.9\columnwidth]{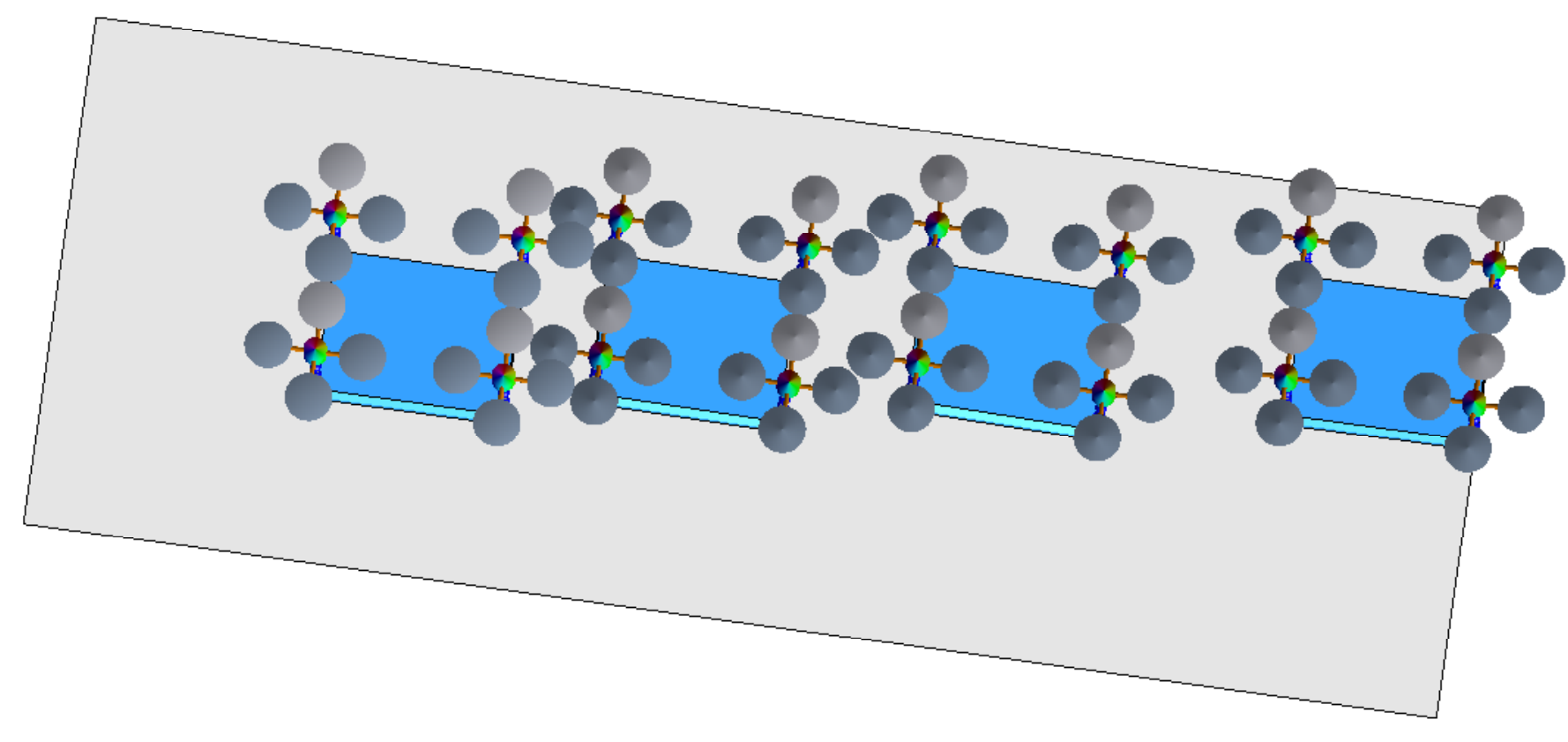}}
}
\caption{Snapshots of controlled maneuver}\label{fig:simresults1snap}
\end{figure}

\begin{figure}
\centerline{\hspace{-0.5cm}
	\subfigure[Payload position $x_0$:blue, $x_{0_{d}}$:red]{
		\includegraphics[width=0.5\columnwidth]{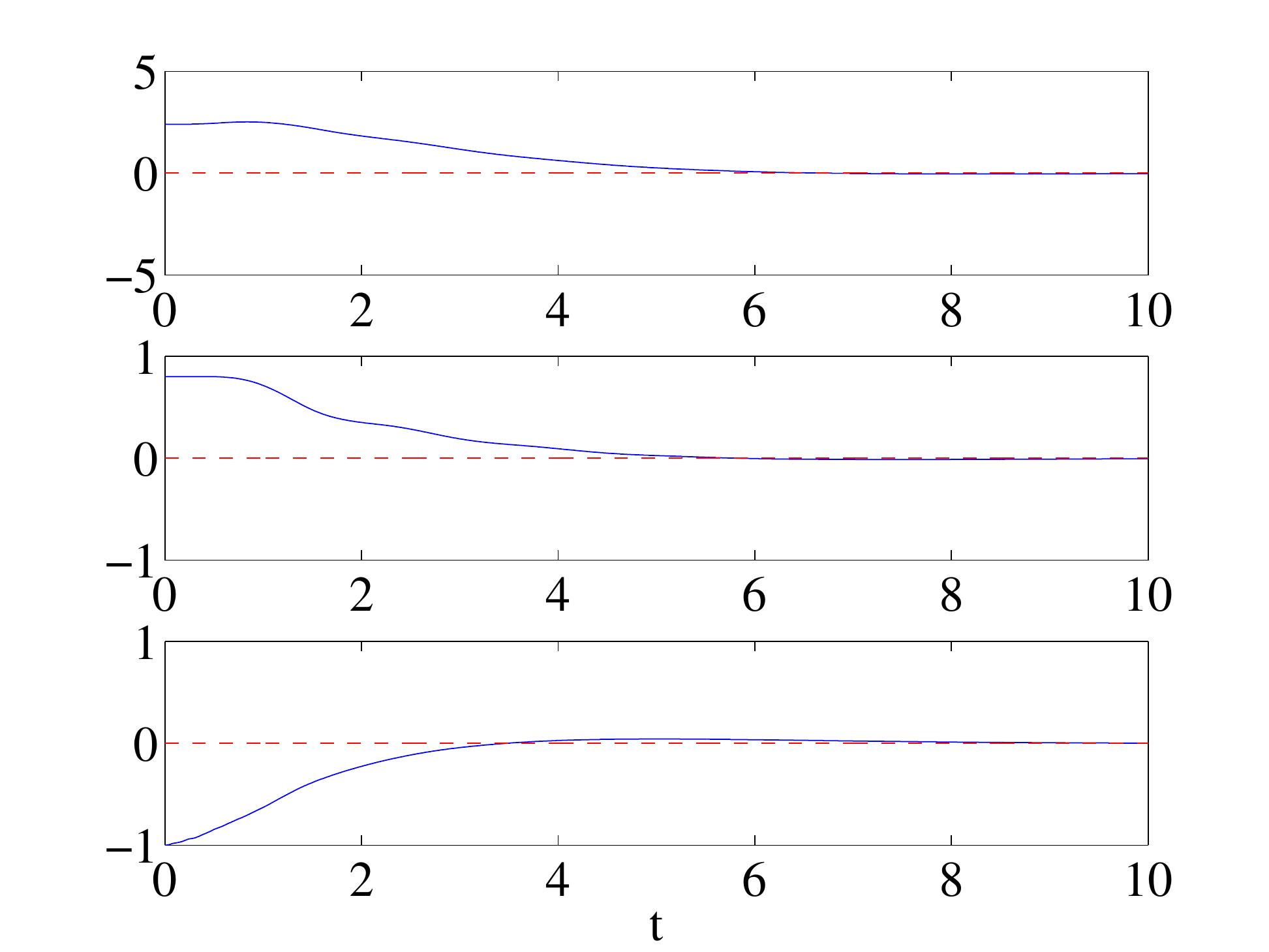}\label{fig:case2x0}}
	\subfigure[Payload velocity $v_0$:blue, $v_{0_{d}}$:red]{
		\includegraphics[width=0.5\columnwidth]{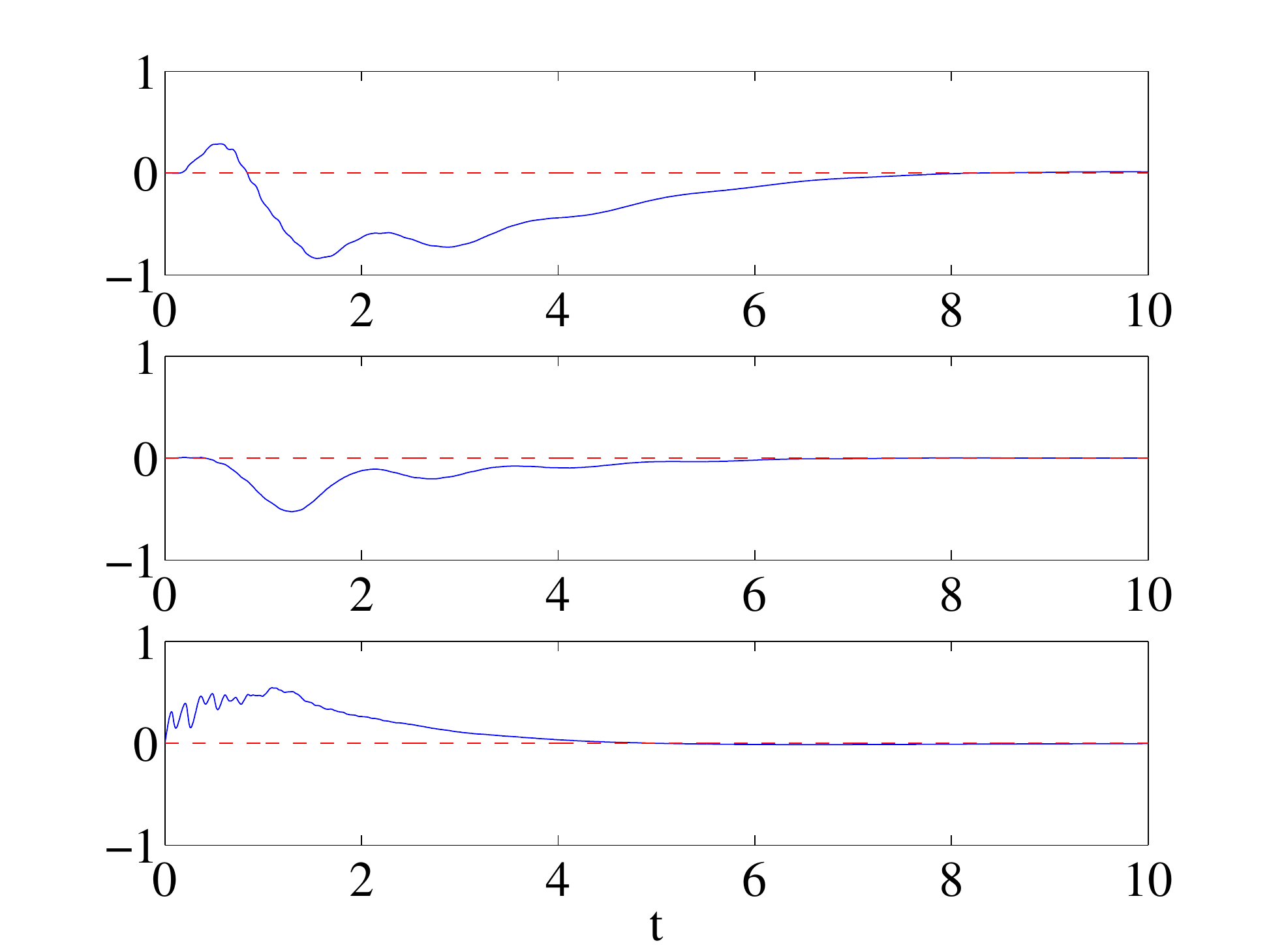}\label{fig:case2v0}}
}
\centerline{\hspace{-0.5cm}
	\subfigure[Payload angular velocity $\Omega_{0}$]{
		\includegraphics[width=0.5\columnwidth]{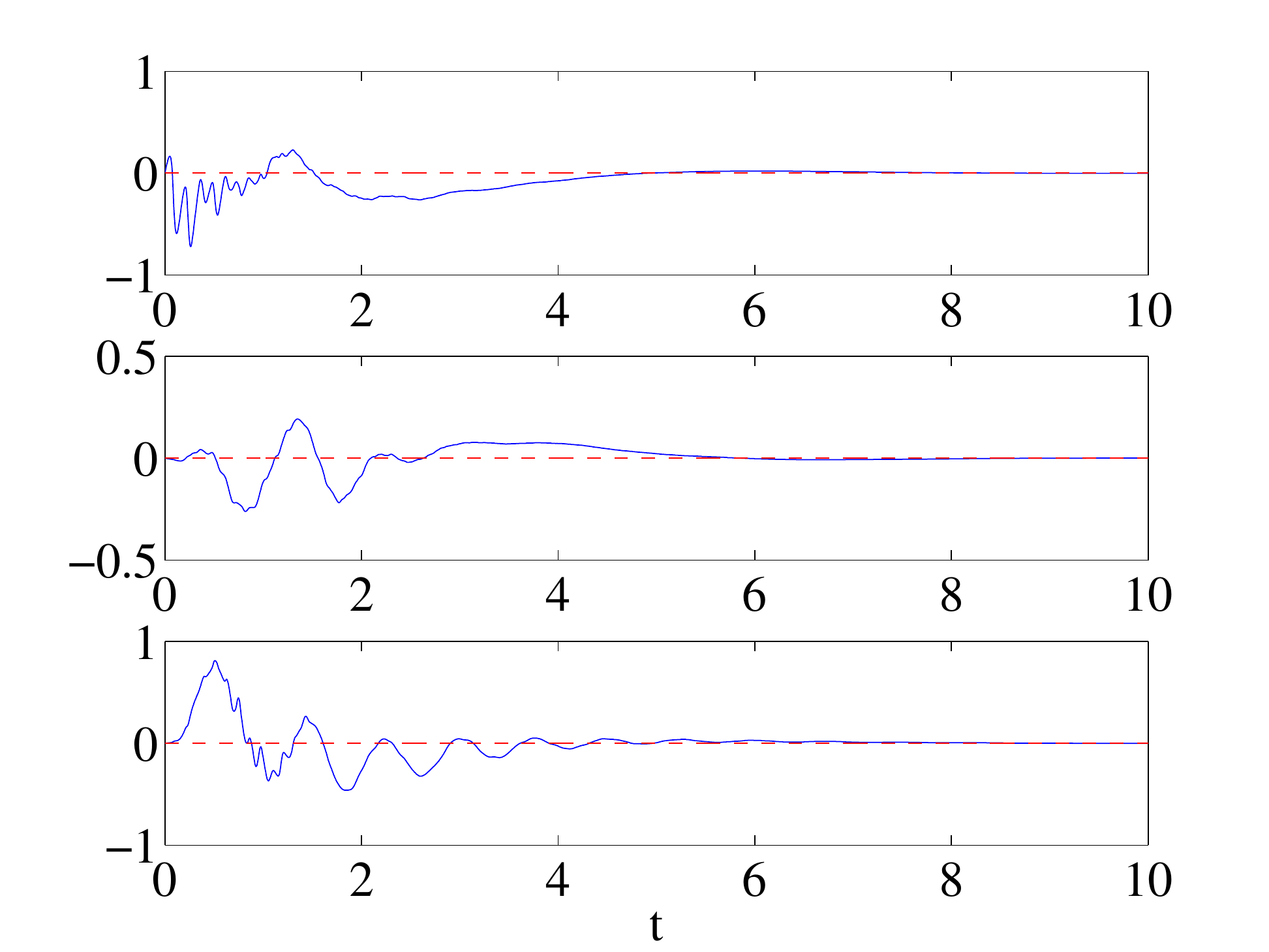}\label{fig:case2W0}}
	\subfigure[Quadrotors angular velocity errors $e_{\Omega_{i}}$]{
		\includegraphics[width=0.5\columnwidth]{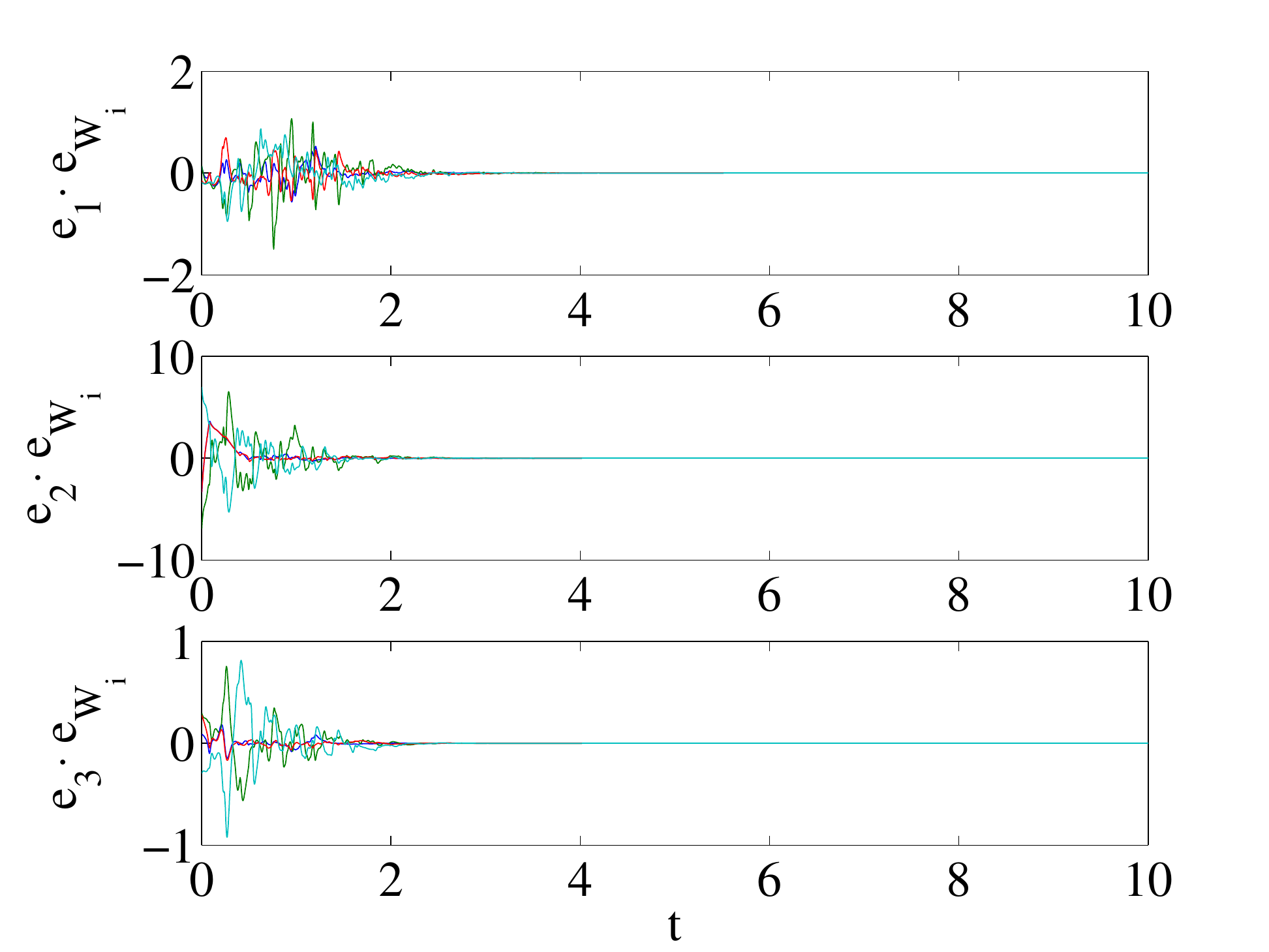}\label{fig:case2eW}}
}
\centerline{\hspace{-0.5cm}
	\subfigure[Payload attitude error $\psi_{0}$]{
		\includegraphics[width=0.5\columnwidth]{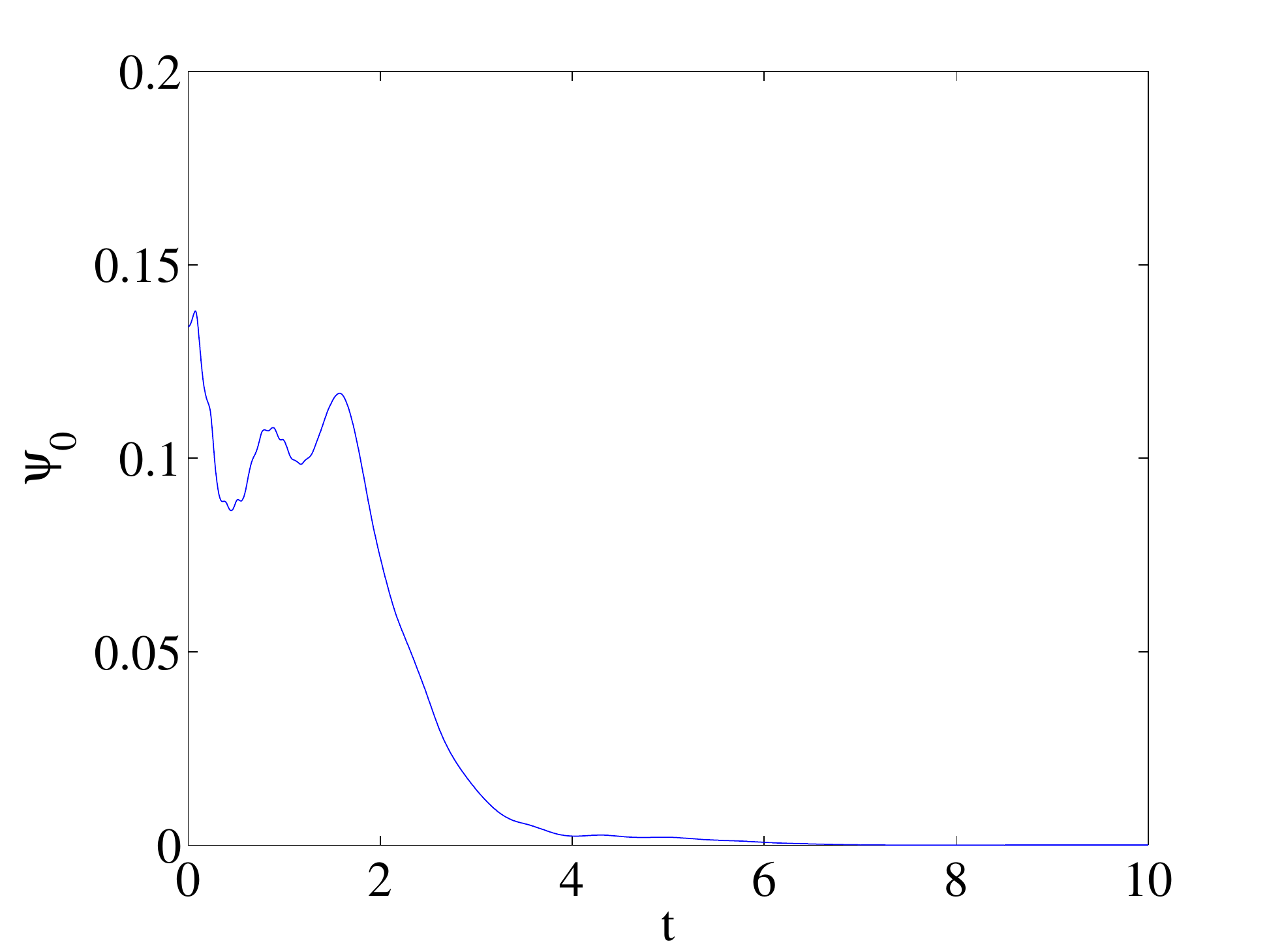}\label{fig:case2psi0}}
	\subfigure[Quadrotors attitude errors $\psi_{i}$]{
		\includegraphics[width=0.5\columnwidth]{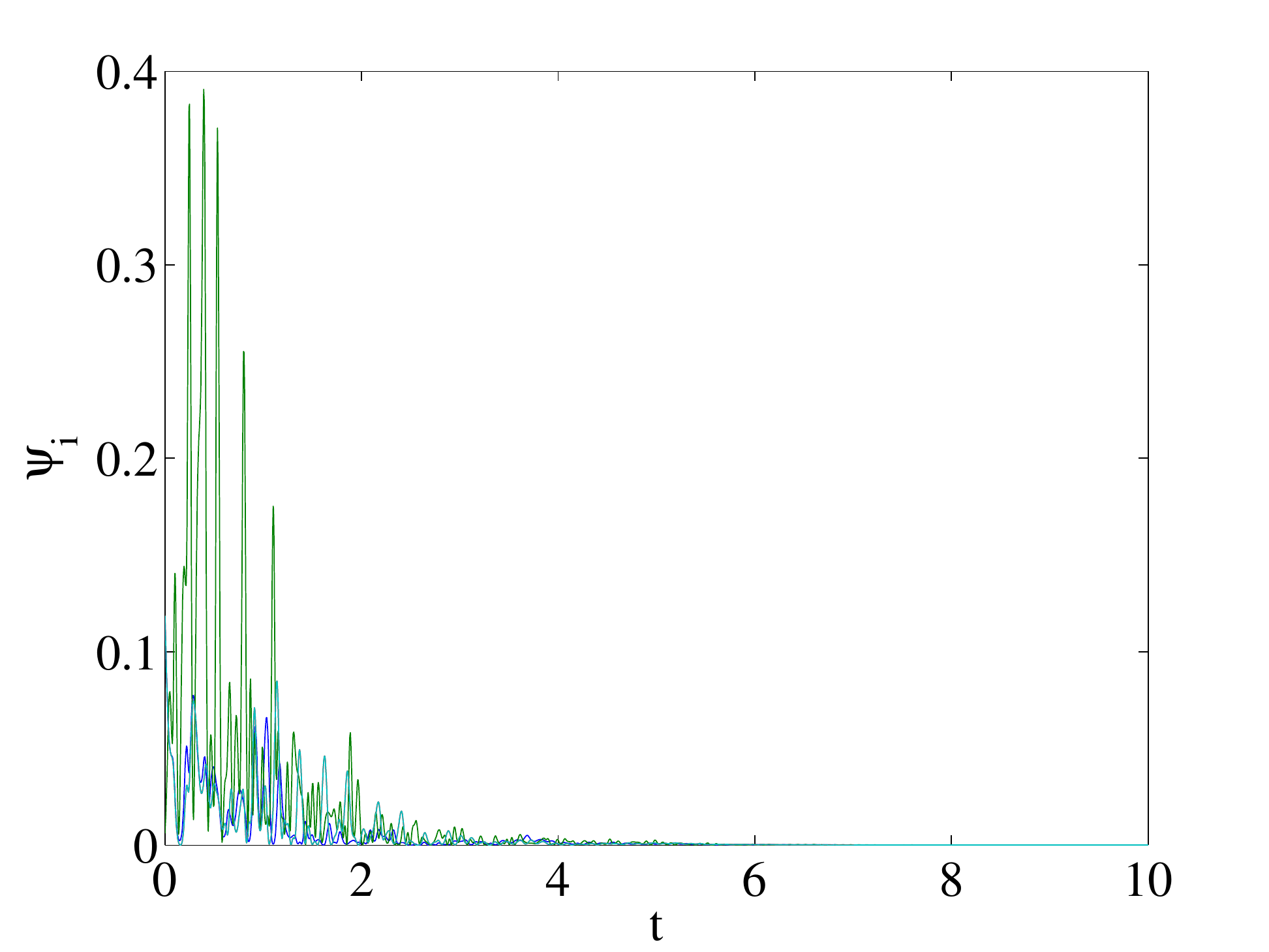}\label{fig:case2psii}}
}
\centerline{\hspace{-0.5cm}
	\subfigure[Quadrotors total thrust inputs $f_{i}$]{
		\includegraphics[width=0.5\columnwidth]{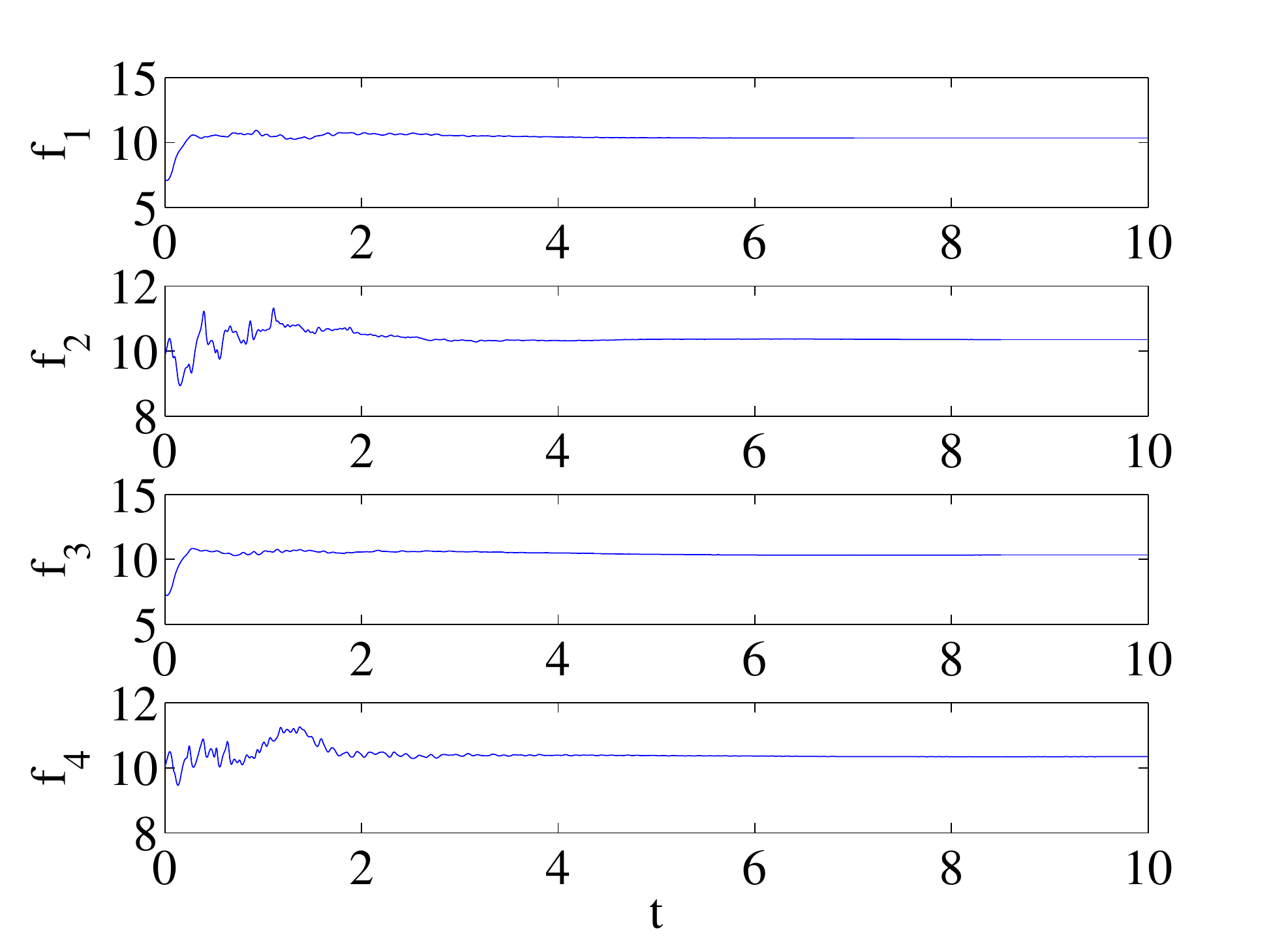}\label{fig:case2ui}}
	\subfigure[Direction error $e_{q}$, and angular velocity error $e_{\omega}$ for the links]{
		\includegraphics[width=0.5\columnwidth]{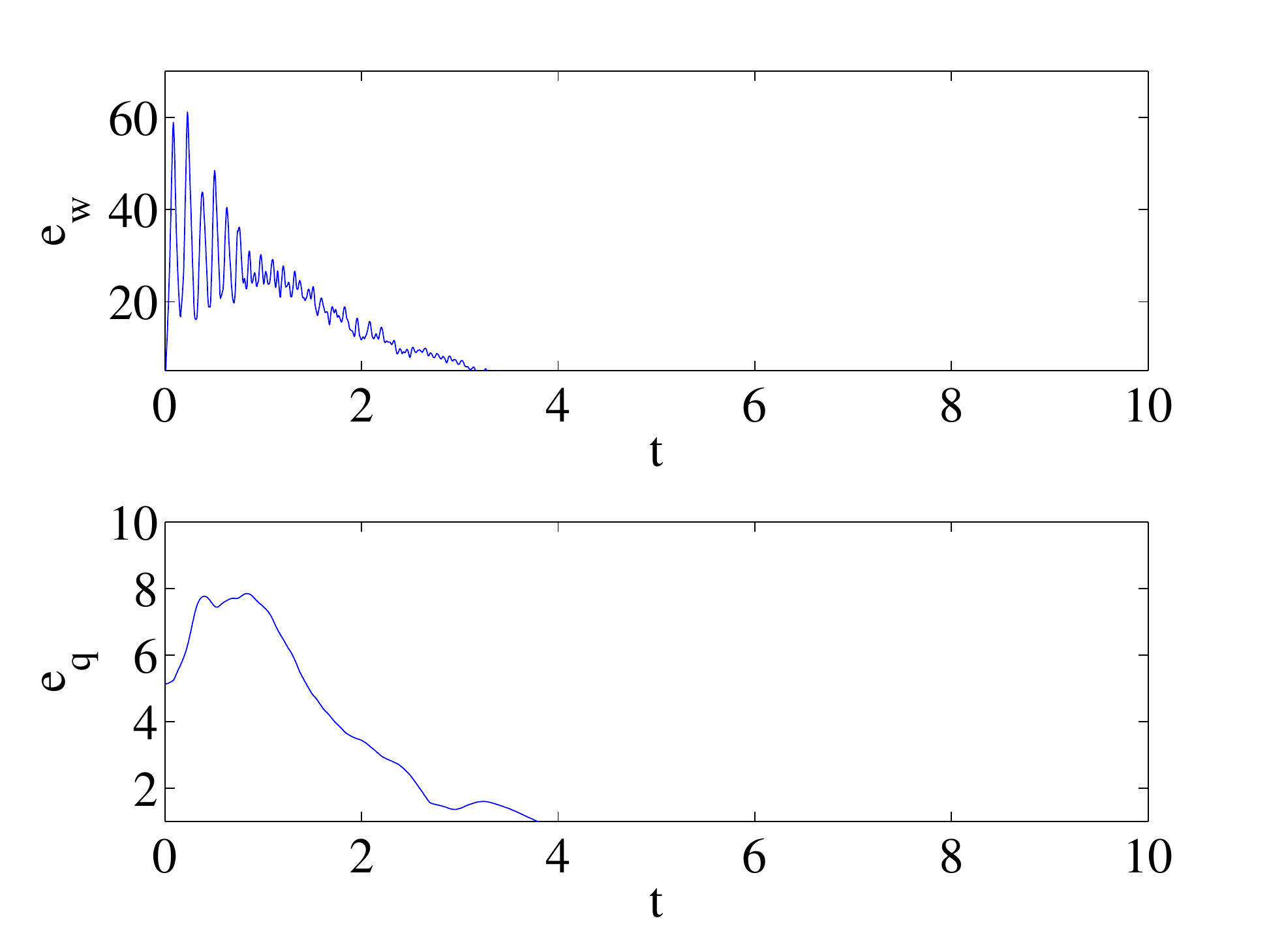}\label{fig:case2errors}}
}
\caption{Stabilization of a payload with multiple quadrotors connected with flexible cables.}\label{fig:simresults2}
\end{figure}
\subsection{Payload Stabilization with Large Initial Attitude Errors}
In the second case, we consider large initial errors for the attitude of the payload and quadrotors. Initially, the rigid body is tilted in its $b_{1}$ axis by $30$ degrees, and the initial direction of the links are chosen such that two cables are curved along the horizontal direction. The initial conditions are given by
\begin{gather*}
x_{0}(0)=[2.4,\; 0.8,\; -1.0]^{T},\; v_{0}(0)=0_{3\times 1},\\
\omega_{ij}(0)=0_{3\times 1},\; \Omega_{i}(0)=0_{3\times 1}\\
R_{0}(0)=R_{x}(30^\circ),\; \Omega_{0}=0_{3\times 1},
\end{gather*}
where $R_x(30^\circ)$ denotes the rotation about the first axis by $30^\circ$. The initial attitude of quadrotors are chosen as
\begin{gather*}
R_{1}(0)=R_{y}(-35^\circ),\; R_{2}(0)=I_{3\times 3},\\ 
R_{3}(0)=R_{y}(-35^\circ),\; R_{4}(0)=I_{3\times 3}.
\end{gather*}

The properties of quadrotors and cables are identical to the previous case. The payload mass is $m=1.0\,\mathrm{kg}$ , and its length, width, and height are $1.0$, $1.2$, and $0.2\,\mathrm{m}$, respectively.

\begin{figure}
\centerline{
	\subfigure[$t=0$ Sec.]{
		\includegraphics[width=0.3\columnwidth]{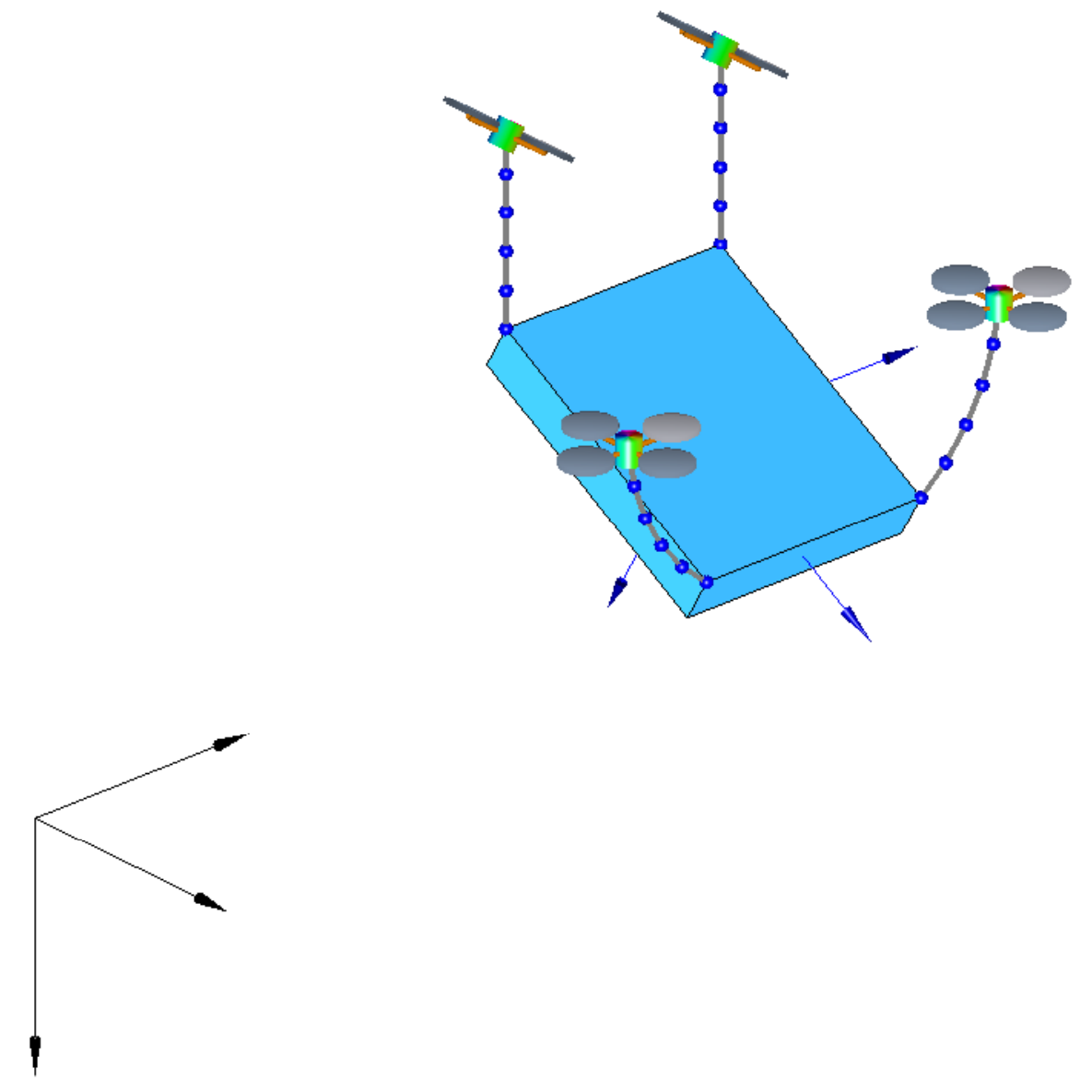}}
	\subfigure[$t=0.14$ Sec.]{
		\includegraphics[width=0.3\columnwidth]{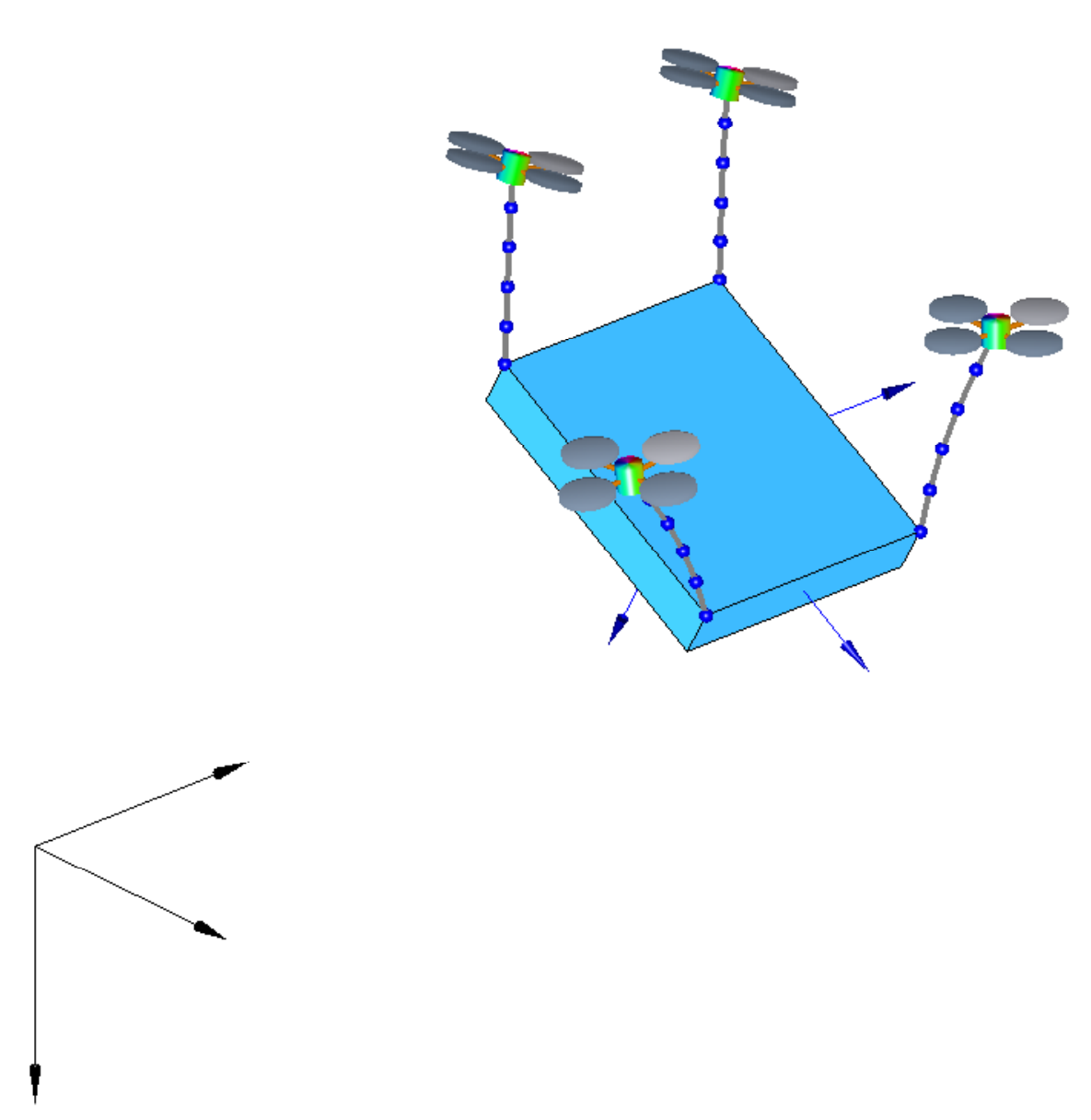}}
		\subfigure[$t=0.30$ Sec.]{
		\includegraphics[width=0.3\columnwidth]{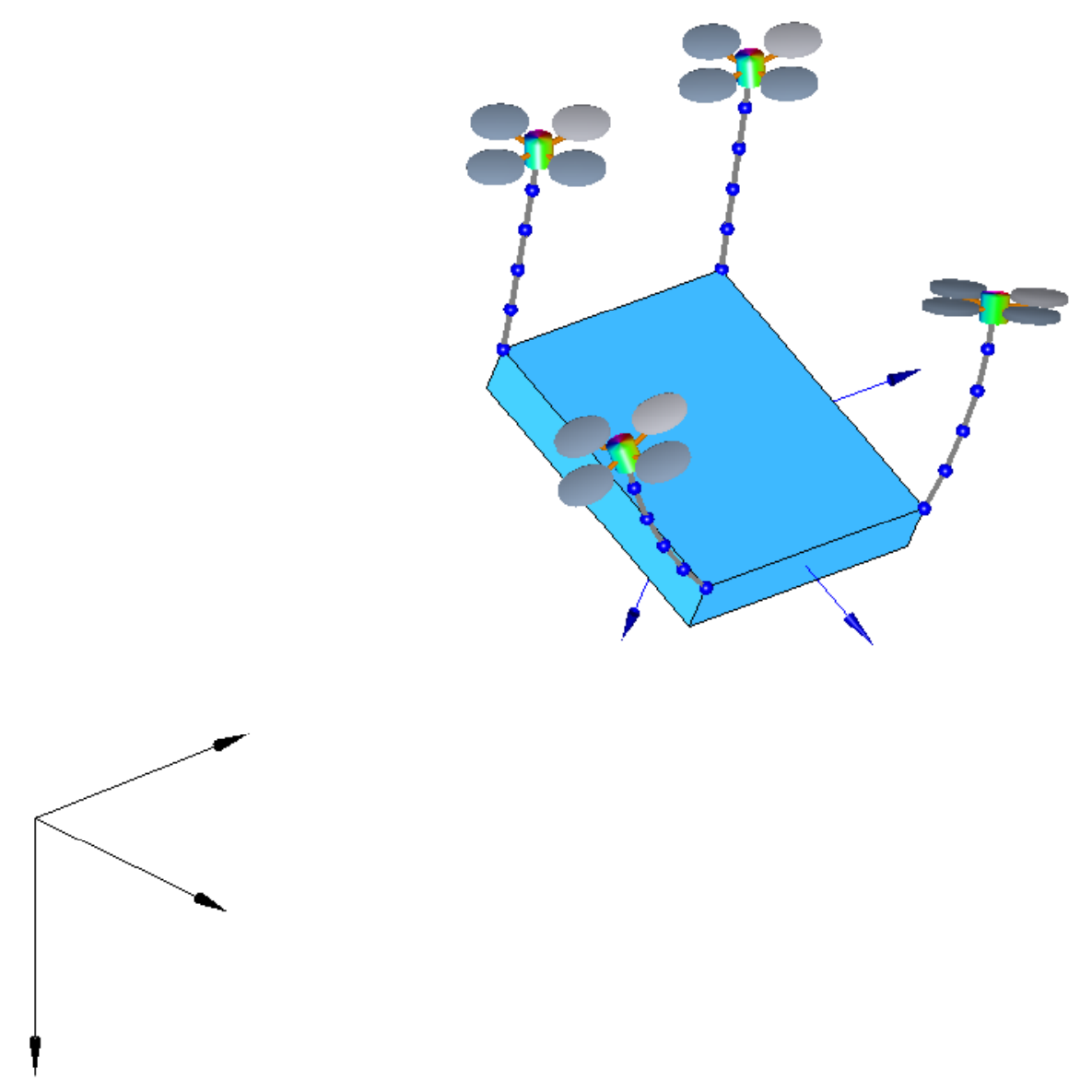}}
}
\centerline{
	\subfigure[$t=0.68$ Sec.]{
		\includegraphics[width=0.3\columnwidth]{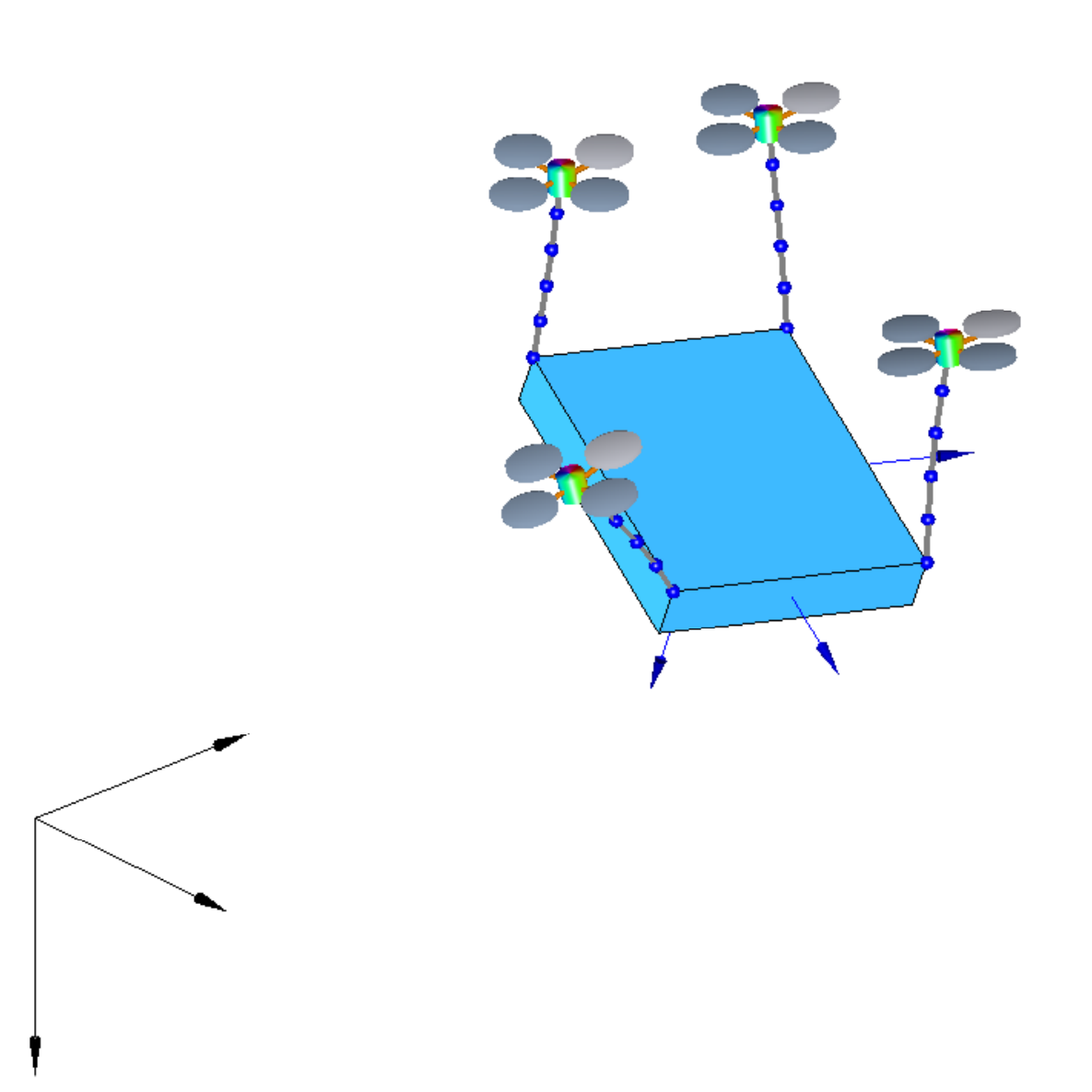}}
	\subfigure[$t=1.10$ Sec.]{
		\includegraphics[width=0.3\columnwidth]{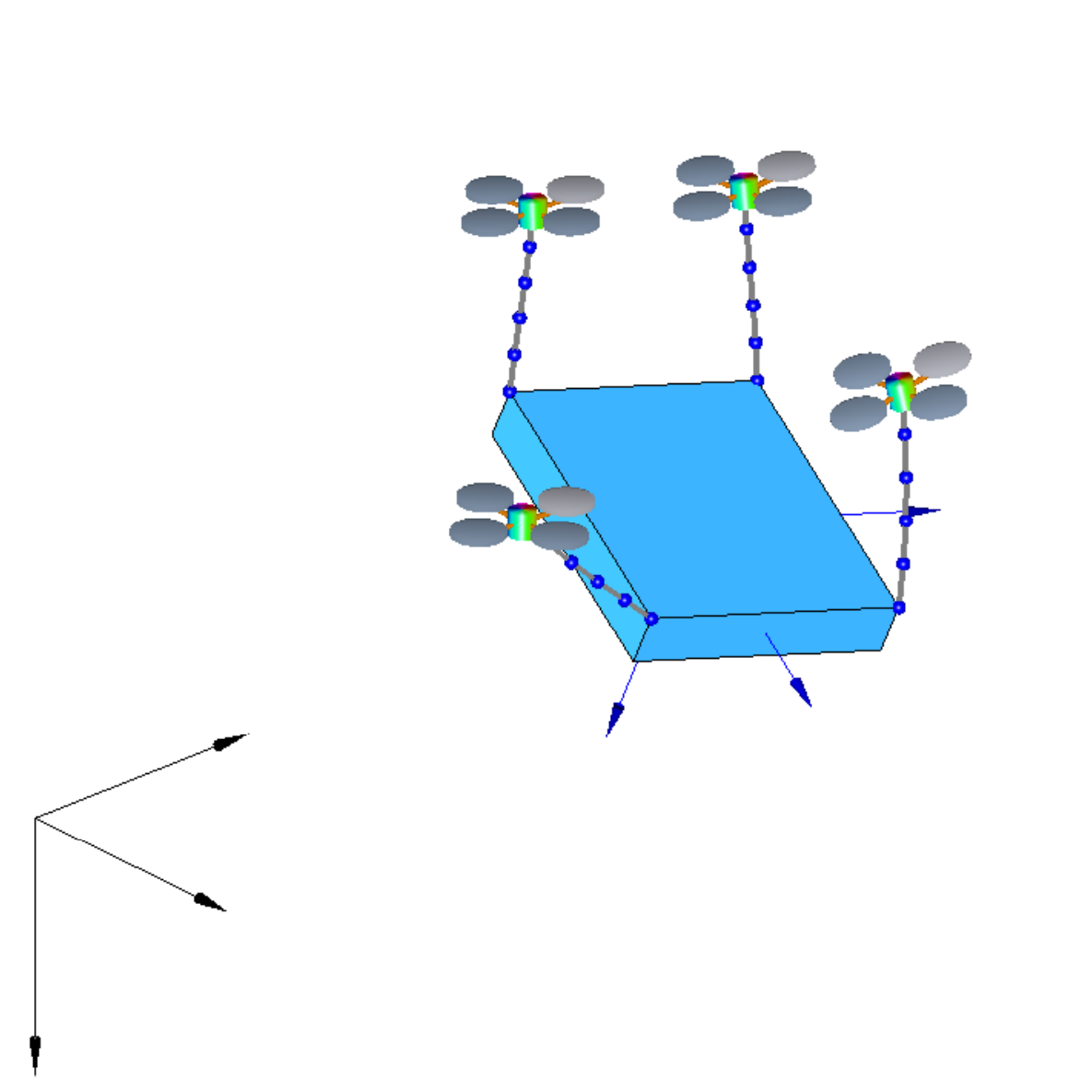}}
		\subfigure[$t=1.36$ Sec.]{
		\includegraphics[width=0.3\columnwidth]{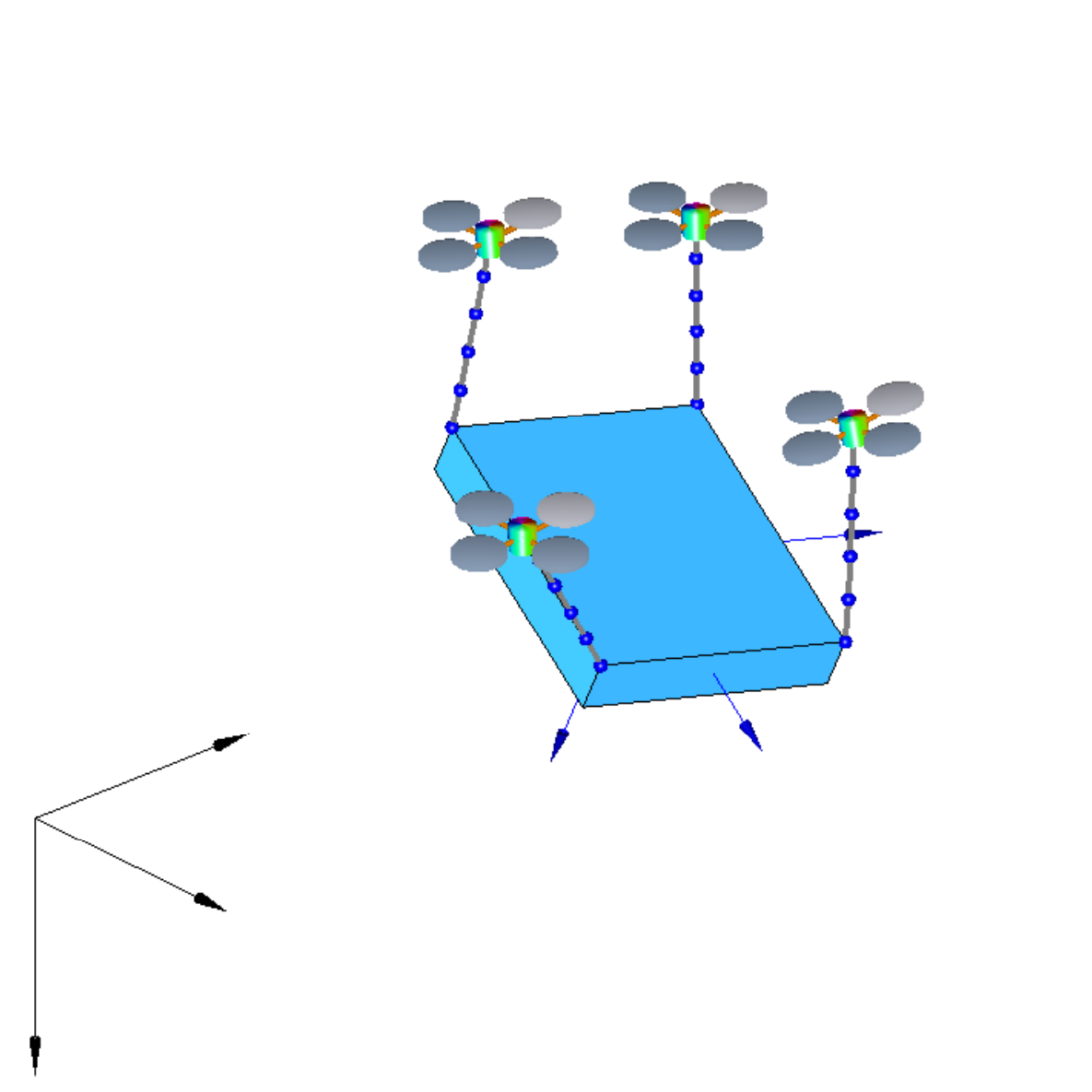}}
}
\centerline{
	\subfigure[$t=1.98$ Sec.]{
		\includegraphics[width=0.3\columnwidth]{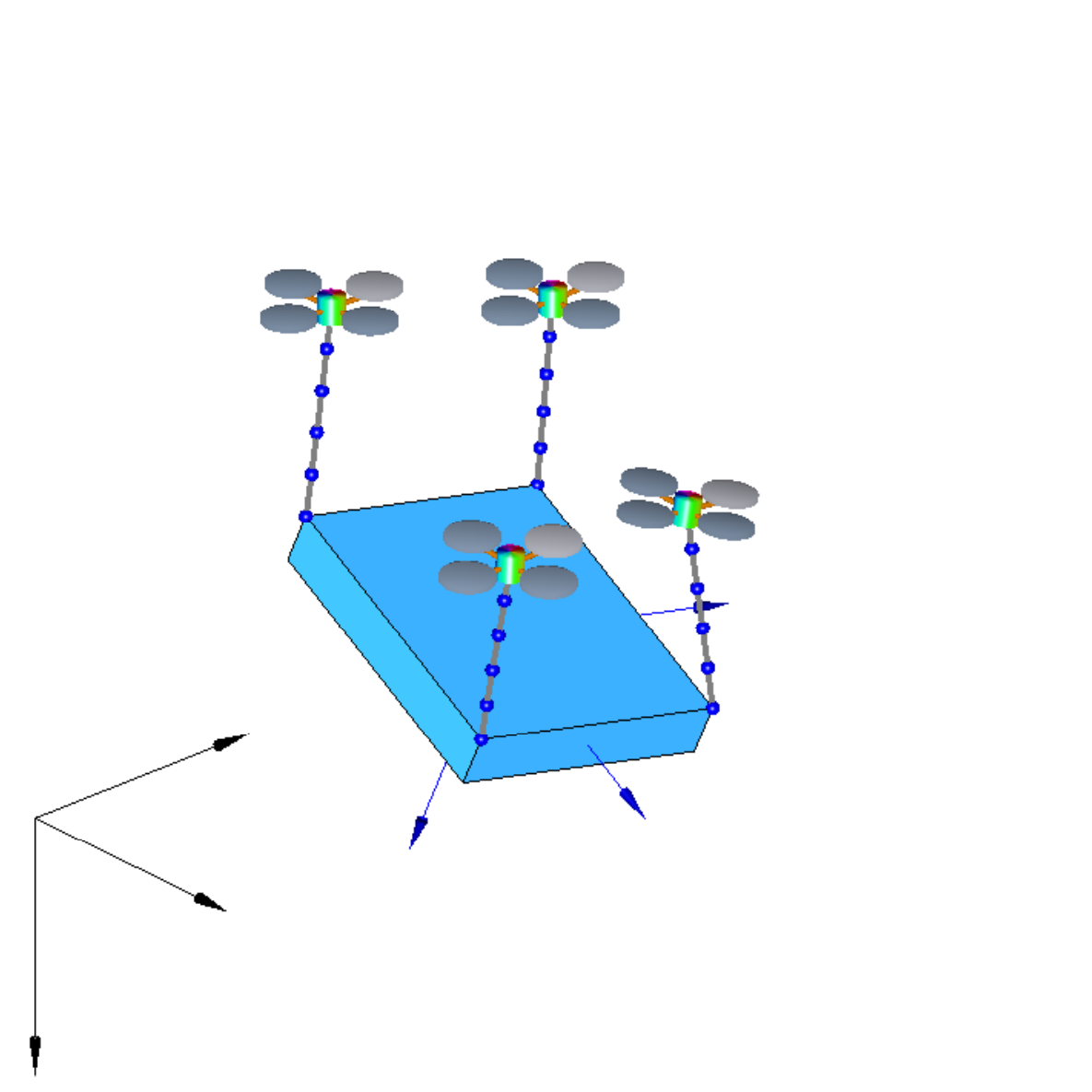}}
	\subfigure[$t=3.48$ Sec.]{
		\includegraphics[width=0.3\columnwidth]{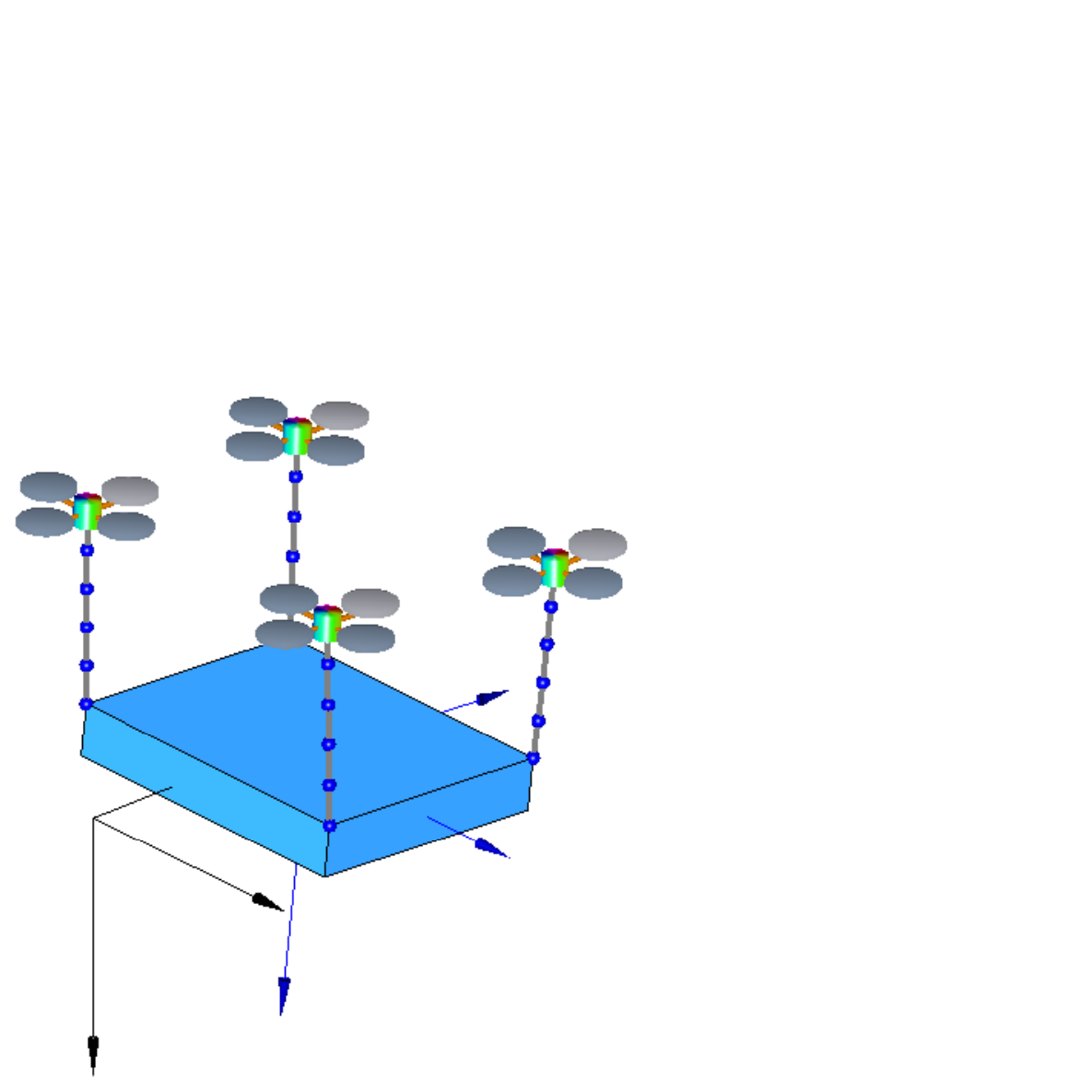}}
		\subfigure[$t=10$ Sec.]{
		\includegraphics[width=0.3\columnwidth]{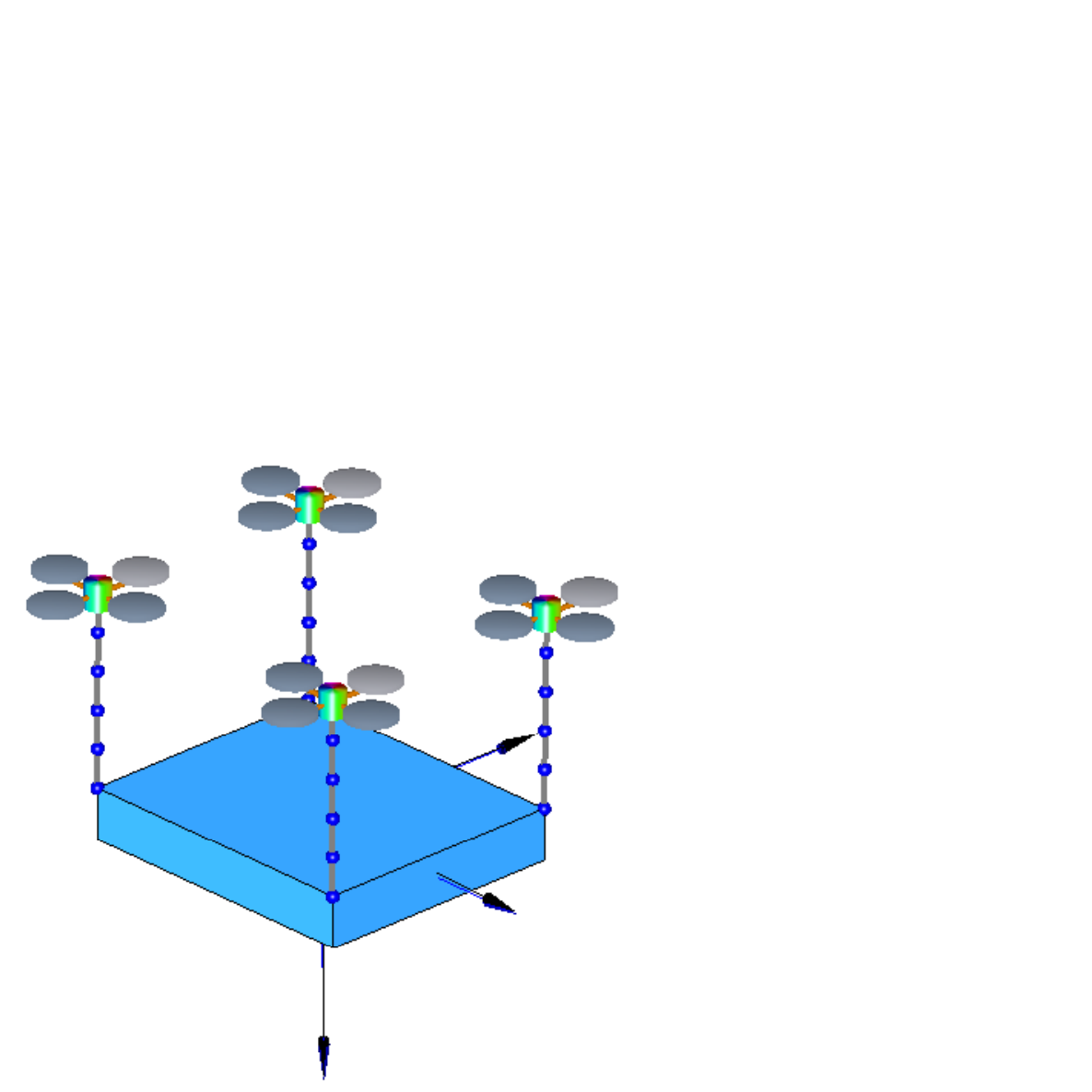}}
}
\caption{Snapshots of the controlled maneuver. A short animation is also available at  \href{http://youtu.be/Mp4Riw6xBl4}{http://youtu.be/Mp4Riw6xBl4}}
\label{fig:simresults3snap}
\end{figure}

Figure \ref{fig:simresults2} illustrates the tracking errors, and the total thrust of each quadrotor. Snapshots of the controlled maneuvers is also illustrated at Figure \ref{fig:simresults3snap}. It is shown that the proposed controller is able to stabilize the payload and cables at their desired configuration even from the large initial attitude errors.


\appendix
\subsection{Proof for Proposition \ref{prop:FDM}}\label{sec:PfFDM}
\subsubsection{Kinetic Energy}
The kinetic energy of the whole system is composed of the kinetic energy of quadrotors, cables and the rigid body, as
\begin{align}
T=&\frac{1}{2}m_{0}\|\dot{x}_{0}\|^{2}+\sum_{i=1}^{n}\sum_{j=1}^{n_{i}}{\frac{1}{2}m_{ij}\|\dot{x}_{ij}\|^{2}}+\frac{1}{2}\sum_{i=1}^{n}{m_{i}\|\dot{x}_{i}\|^{2}}\nonumber\\
&+\frac{1}{2}\sum_{i=1}^{n}{\Omega_{i}\cdot J_{i}\Omega_{i}}+\frac{1}{2}\Omega_{0}\cdot J_{0}\Omega_{0}.
\end{align}
Substituting the derivatives of \refeqn{xi} and \refeqn{xij} into the above expression we have
\begin{align}
T=&\frac{1}{2}m_{0}\|\dot{x}_{0}\|^{2}+\sum_{i=1}^{n}\sum_{j=1}^{n_{i}}{\frac{1}{2}m_{ij}\|\dot{x}_{0}+\dot{R}_{0}\rho_{i}-\sum_{a=j+1}^{n_{i}}{l_{ia}\dot{q}_{ia}}\|^{2}} \nonumber \\
&+\frac{1}{2}\sum_{i=1}^{n}{m_{i}\|\dot{x}_{0}+\dot{R}_{0}\rho_{i}-\sum_{a=1}^{n_{i}}{l_{ia}\dot{q}_{ia}}\|^{2}} \nonumber\\
&+\frac{1}{2}\sum_{i=1}^{n}{\Omega_{i}\cdot J_{i}\Omega_{i}}+\frac{1}{2}\Omega_{0}\cdot J_{0}\Omega_{0}.
\end{align}
We expand the above expression as follow
\begin{align}\label{eqn:kineticbs}
T=&\frac{1}{2}(m_{0}\|\dot{x}_{0}\|^{2}+\sum_{i=1}^{n}\sum_{j=1}^{n_{i}}{m_{ij}\|\dot{x}_{0}\|^{2}}+\sum_{i=1}^{n}{m_{i}\|\dot{x}_{0}\|^2}) \nonumber\\
&+\frac{1}{2}\sum_{i=1}^{n}(\sum_{j=1}^{n_{i}}{m_{ij}\|\dot{R}_{0}\rho_{i}\|^2}+m_{i}\|\dot{R}_{0}\rho_{i}\|^2) \nonumber\\
&+\sum_{i=1}^{n}(\sum_{j=1}^{n_{i}}{m_{ij}\dot{x}_{0}\cdot\dot{R}_{0}\rho_{i}}+m_{i}\dot{x}_{0}\cdot\dot{R}_{0}\rho_{i}) \nonumber\\
&+\frac{1}{2}\sum_{i=1}^{n}(\sum_{j=1}^{n_{i}}{m_{ij}\|\sum_{a=j+1}^{n_{i}}{l_{ia}\dot{q}_{ia}}\|^2}+{m_{i}\|\sum_{a=1}^{n_{i}}{l_{ia}\dot{q}_{ia}}\|^2}) \nonumber\\
&-\sum_{i=1}^{n}(\sum_{j=1}^{n_{i}}{m_{ij}\dot{x}_{0}}\cdot\sum_{a=j+1}^{n_{i}}{{l_{ia}\dot{q}_{ia}}}+\dot{x}_{0}\cdot\sum_{a=1}^{n_{i}}{l_{ia}\dot{q}_{ia}}) \nonumber\\
&-\sum_{i=1}^{n}(\sum_{j=1}^{n_{i}}{m_{ij}\dot{R}_{0}\rho_{i}}\cdot\sum_{a=j+1}^{n_{i}}{l_{ia}\dot{q}_{ia}}+m_{i}\dot{R}_{0}\rho_{i}\cdot\sum_{a=1}^{n_{i}}{l_{ia}\dot{q}_{ia}}) \nonumber\\
&+\frac{1}{2}\sum_{i=1}^{n}{\Omega_{i}\cdot J_{i}\Omega_{i}}+\frac{1}{2}\Omega_{0}\cdot J_{0}\Omega_{0},
\end{align}
and substituting \refeqn{def1}, \refeqn{def3}, it is rewritten as
\begin{align}\label{eqn:kinetic}
T=&\frac{1}{2}M_{T}\|\dot{x}_{0}\|^2+\frac{1}{2}\sum_{i=1}^{n}{M_{iT}\|\dot{R}_{0}\rho_{i}\|^{2}}+\sum_{i=1}^{n}({M_{iT}\dot{x}_{0}\cdot\dot{R}_{0}\rho_{i}}) \nonumber\\
&+\sum_{i=1}^{n}\sum_{j,k=1}^{n_{i}}{M_{0ij}l_{ik}\dot{q}_{ij}\cdot\dot{q}_{ik}}-\sum_{i=1}^{n}({\dot{x}_{0}\cdot\sum_{j=1}^{n_{i}}{M_{0ij}l_{ij}\dot{q}_{ij}}}) \nonumber\\
&-\sum_{i=1}^{n}({\dot{R}_{0}\rho_{i}}\cdot\sum_{j=1}^{n_{i}}{M_{0ij}l_{ij}\dot{q}_{ij}}) \nonumber\\
&+\frac{1}{2}\sum_{i=1}^{n}{\Omega_{i}\cdot J_{i}\Omega_{i}}+\frac{1}{2}\Omega_{0}\cdot J_{0}\Omega_{0}.
\end{align}
\subsubsection{Potential Energy}
We can derive the potential energy expression by considering the gravitational forces on each part of system as given
\begin{align}
V=-m_{0}ge_{3}\cdot x_{0}-\sum_{i=1}^{n}{m_{i}ge_{3}}\cdot x_{i}-\sum_{i=1}^{n}\sum_{j=1}^{n_{i}}{m_{ij}ge_{3}}\cdot x_{ij}.
\end{align}
Using \refeqn{xi} and \refeqn{xij}, we obtain
\begin{align}
V=&-m_{0}ge_{3}\cdot x_{0}-\sum_{i=1}^{n}{m_{i}ge_{3}}\cdot (x_{0}+R_{0}\rho_{i}-\sum_{a=1}^{n_{i}}{l_{ia}q_{ia}}) \nonumber\\
&-\sum_{i=1}^{n}\sum_{j=1}^{n_{i}}{m_{ij}ge_{3}}\cdot (x_{0}+R_{0}\rho_{i}-\sum_{a=j+1}^{n_{i}}{l_{ia}q_{ia}}),
\end{align}
and utilizing \refeqn{def3}, we can simplify the potential energy as
\begin{align}
V=-M_{T}ge_{3}\cdot x_{0}-\sum_{i=1}^{n}{M_{iT}ge_{3}\cdot R_{0}\rho_{i}}+\sum_{i=1}^{n}\sum_{j=1}^{n_{i}}{M_{0ij}l_{ij}q_{ij}\cdot e_{3}}.
\end{align}
\subsubsection{Derivatives of Lagrangian}
We develop the equation of motion for the Lagrangian $L=T-V$. The derivatives of the Lagrangian are given by
\begin{align}
&D_{\dot{x}_{0}}L=M_{T}\dot{x}_{0}+\sum_{i=1}^{n}{M_{iT}\dot{R}_{0}\rho_{i}}-\sum_{i=1}^{n}\sum_{j=1}^{n_{i}}{M_{0ij}l_{ij}\dot{q}_{ij}},\\
&D_{x_{0}}L=M_{T}ge_{3},\\
&D_{\dot{q}_{ij}}L=\sum_{i=1}^{n}\sum_{j=1}^{n_{i}}{M_{0ij}l_{ik}\dot{q}_{ik}}-\sum_{i=1}^{n}{M_{0ij}l_{ij}(\dot{x}_{0}}+{\dot{R}_{0}\rho_{i}}),\\
&D_{q_{ij}}L=-\sum_{i=1}^{n}{M_{0ij}l_{ij}e_{3}},
\end{align}
where $D_{\dot x_0}$ denote the derivative with respect to $\dot x_0$, and other derivatives are defined similarly. We also have
\begin{align}
D_{\Omega_{0}}L=&J_{0}\Omega_{0}+\sum_{i=1}^{n}{M_{iT}\hat{\rho}_{i}R_{0}^{T}\dot{x}_{0}},\nonumber\\
&-\sum_{i=1}^{n}\sum_{j=1}^{n_{i}}{M_{0ij}l_{ij}\hat{\rho}_{i}R_{0}^{T}\dot{q}_{ij}}-\sum_{i=1}^{n}{M_{iT}\hat{\rho}_{i}^{2}\Omega_{0}},
\end{align}
\begin{align}
&D_{\Omega_{0}}L=\bar{J}_{0}\Omega_{0}+\sum_{i=1}^{n}{\hat{\rho}_{i}R_{0}^{T}(M_{iT}\dot{x}_{0}-\sum_{j=1}^{n_{i}}{M_{0ij}l_{ij}\dot{q}_{ij}})},\\
&D_{\Omega_{i}}L=\sum_{i=1}^{n}{J_{i}\Omega_{i}},
\end{align}
where $\bar{J}_{0}$ is defined as
\begin{align}
\bar{J}_{0}=J_{0}-\sum_{i=1}^{n}{M_{iT}\hat{\rho}_{i}^{2}}.
\end{align}
The derivation of the Lagrangian with respect to $R_{0}$ is given by
\begin{align}
D_{R_{0}}L\cdot\delta R_{0}=&\sum_{i=1}^{n}{M_{iT}R_{0}\hat{\eta}_{0}\hat{\Omega}_{0}\rho_{i}\cdot\dot{x}_{0}}\nonumber\\
&-\sum_{i=1}^{n}{R_{0}\hat{\eta}_{0}\hat{\Omega}_{0}\rho_{i}\cdot\sum_{j=1}^{n_{i}}{M_{0ij}l_{ij}\dot{q}_{ij}}}\nonumber\\
&+\sum_{i=1}^{n}{M_{iT}ge_{3}\cdot R_{0}\hat{\eta}_{0}\rho_{i}},
\end{align}
which can be rewritten as
\begin{align}
&D_{R_{0}}L\cdot\delta R_{0}=d_{R_{0}}\cdot \eta_{0},
\end{align}
where
\begin{align}
d_{R_{0}}=\sum_{i=1}^{n}&(((\widehat{\hat{\Omega}_{0}\rho_{i}}R_{0}^{T}(M_{iT}\dot{x}_{0})-\sum_{j=1}^{n_{i}}{M_{0ij}l_{ij}\dot{q}_{ij}})\nonumber\\
&+M_{iT}g\hat{\rho}_{i}R_{0}^{T}e_{3})).
\end{align}
\subsubsection{Lagrange-d'Alembert Principle}
Consider $\mathfrak{G}=\int_{t_{0}}^{t_{f}}{L}$ be the action integral. Using the equations derived in previous section, the infinitesimal variation of the action integral can be written as
\begin{align}
\delta \mathfrak{G}=&\int_{t_{0}}^{t_{f}}D_{\dot{x}_{0}}L\cdot\delta\dot{x}_{0}+D_{x_{0}}\cdot\delta x_{0} \nonumber\\
&+D_{\Omega_{0}}L(\dot{\eta}_{0}+\Omega_{0}\times \eta_{0})+d_{R_{0}}L\cdot\eta_{0} \nonumber\\
&+\sum_{i=1}^{n}\sum_{j=1}^{n_{i}}{D_{\dot{q}_{ij}}L(\dot{\xi}_{ij}\times q_{ij}+\xi_{ij}\times\dot{q}_{ij})}\nonumber\\
&+\sum_{i=1}^{n}\sum_{j=1}^{n_{i}}{D_{q_{ij}}L\cdot(\xi_{ij}\times q_{ij})} \nonumber\\
&+\sum_{i=1}^{n}{D_{\Omega_{i}}L\cdot(\dot{\eta}_{i}+\Omega_{i}\times\eta_{i})}.
\end{align}
The total thrust at the $i$-th quadrotor with respect to the inertial frame is denoted by $u_{i}=-f_{i}R_{i}e_{3}\in\Re^{3}$ and the total moment at the $i$-th quadrotor is defined as $M_{i}\in\Re^{3}$. The corresponding virtual work is given by
\begin{align}
\delta W=\int_{t_{0}}^{t_{f}}&{\sum_{i=1}^{n}{u_{i}\cdot\{\delta x_{0}+R_{0}\hat{\eta}_{0}\rho_{i}-\sum_{j=1}^{n_{i}}{l_{ij}\dot{\xi}_{ij}\times q_{ij}}}}\} \nonumber\\
&+M_{i}\cdot \eta_{i}\; dt.
\end{align}
According to Lagrange-d Alembert principle, we have $\delta \mathfrak{G}=-\delta W$ for any variation of trajectories with fixes end points. By using integration by parts and rearranging, we obtain the following Euler-Lagrange equations
\begin{gather}
\frac{d}{dt}D_{\dot{x}_{i}}L-D_{x_{0}}L=\sum_{i=1}^{n}{u_{i}},\\
\frac{d}{dt}D_{\Omega_{0}}+\Omega_{0}\times D_{\Omega_{0}}-d_{R_{0}}=\sum_{i=1}^{n}{\hat{\rho}_{i}R_{0}^{T}u_{i}},\\
\hat{q}_{ij}\frac{d}{dt}D_{\dot{q}_{ij}}L-\hat{q}_{ij}D_{q_{i}}L=-l_{ij}\hat{q}_{ij}u_{i},\\
\frac{d}{dt}D_{\Omega_{i}}L+\Omega_{i}\times D_{\Omega_{i}}L=M_{i}.
\end{gather}
Substituting the derivatives of Lagrangians into the above expression and rearranging, the equations of motion are given by \refeqn{EOMM1}, \refeqn{EOMM2}, \refeqn{EOMM3}, \refeqn{EOMM4}.

\subsection{Proof for Proposition \ref{prop:stability1}}\label{sec:P1stability}
The variations of $x$ and $q$ are given by \refeqn{xlin} and \refeqn{qlin}. From the kinematics equation $\dot q_{ij}=\omega_{ij}\times q_{ij}$ and
\begin{align*}
\delta \dot q_{ij} = \dot\xi_{ij} \times e_3 =\delta\omega_{ij} \times e_3 + 0\times (\xi_{ij}\times e_3)=\delta\omega_{ij} \times e_3.
\end{align*}
Since both sides of the above equation is perpendicular to $e_3$, this is equivalent to $e_3\times(\dot\xi_{ij}\times e_3) = e_3\times(\delta\omega_{ij}\times e_3)$, which yields
\begin{gather*}
\dot \xi_{ij} - (e_3\cdot\dot\xi_{ij}) e_3 = \delta\omega_{ij} -(e_3\cdot\delta\omega_{ij})e_3.
\end{gather*}
Since $\xi_{ij}\cdot e_3 =0$, we have $\dot\xi_{ij}\cdot e_3=0$. As $e_3\cdot\delta\omega_{ij}=0$ from the constraint, we obtain the linearized equation for the kinematics equation of the link
\begin{align}
\dot\xi_{ij} = \delta\omega_{ij}.\label{eqn:dotxii}
\end{align}
The infinitesimal variation of $R_{0}\in\SO$ in terms of the exponential map
\begin{align}
\delta R_{0} = \frac{d}{d\epsilon}\bigg|_{\epsilon = 0} R_{0}\exp (\epsilon \hat\eta_{0}) = R_{0}\hat\eta_{0},\label{eqn:delR0}
\end{align}
for $\eta_{0}\in\Re^3$. Substituting these into \refeqn{EOMM1}, \refeqn{EOMM2}, and \refeqn{EOMM3}, and ignoring the higher order terms, we obtain the following sets of linearized equations of motion 
\begin{gather}
M_{T}\delta \ddot{x}_{0}-\sum_{i=1}^{n}{M_{iT}\hat{\rho}_{i}}\delta\dot{\Omega}_{0}\nonumber\\
+\sum_{i=1}^{n}\sum_{j=1}^{n_{i}}M_{0ij}l_{ij}\hat{e}_{3}C(C^{T}\ddot{\xi}_{ij})=\sum_{i=1}^{n}{\delta u_{i}}\\
\sum_{i=1}^{n}{M_{iT}\hat{\rho}_{i}\delta\ddot{x}_{0}}+\bar{J}_{0}\delta\dot{\Omega}_{0}+\sum_{i=1}^{n}\sum_{j=1}^{n_{i}}M_{0ij}l_{ij}\hat{\rho}_{i}\hat{e}_{3}C(C^{T}\ddot{\xi}_{ij})\nonumber\\
+\sum_{i=1}^{n}\frac{m_{0}}{n}g\hat{\rho}_{i}\hat{e}_{3}\eta_{0}=\sum_{i=1}^{n}{\hat{\rho}_{i}\delta u_{i}}\\
-M_{0ij}C^{T}\hat{e}_{3}\delta\ddot{x}_{0}+M_{0ij}C^{T}\hat{e}_{3}\hat{\rho}_{i}\delta\dot{\Omega}_{0}+\sum_{k=1}^{n_{i}}{M_{0ij}l_{ik}I_{2}(C^{T}\ddot{\xi}_{ij})}\nonumber\\
=-C^{T}\hat{e}_{3}\delta u_{i}+(-M_{iT}-\frac{m_{0}}{n}+M_{0ij})ge_{3} I_{2}(C^{T}\xi_{ij})\\
\dot{\eta}_{i}=\delta\Omega_{i},\; \dot{\eta}_{0}=\delta\Omega_{0},\; J_{i}\delta\Omega_{i}=\delta M_{i},
\end{gather}
which can be written in a matrix form as presented in \refeqn{EOMLin}. See~\cite{FarhadDTLeeIJCAS} for detaied derivations for a similar dynamic system. We used $C^{T}\hat{e}_{3}^{2}C=-I_{2}$ to simplify these derivations. 

\subsection{Proof for Proposition \ref{prop:stability}}\label{sec:Pstabilityddd}
We first show stability of the rotational dynamics of each quadrotor, and later it is combined with the stability analysis for the remaining parts.
\subsubsection{Attitude Error Dynamics}
Here, attitude error dynamics for $e_{R_{i}}$, $e_{\Omega_{i}}$ are derived and we find conditions on control parameters to guarantee the boundedness of attitude tracking errors. The time-derivative of $J_{i}e_{\Omega_{i}}$ can be written as
\begin{align}
J_{i}\dot e_{\Omega_{i}} & = \{J_{i}e_{\Omega_{i}} + d_{i}\}^\wedge e_{\Omega_{i}} - k_R e_{R_{i}}-k_\Omega e_{\Omega_{i}},\label{eqn:JeWdot}
\end{align}
where $d_{i}=(2J_{i}-\trs{[J_{i}]I})R_{i}^TR_{i_{d}}\Omega_{i_d}\in\Re^3$~\cite{Farhad2013}. The important property is that the first term of the right hand side is normal to $e_{\Omega_{i}}$, and it simplifies the subsequent Lyapunov analysis.

\subsubsection{Stability for Attitude Dynamics}
Define a configuration error function on $\SO$ as follows
\begin{align}
\Psi_{i}= \frac{1}{2}\trs[I- R_{{i}_c}^T R_{i}].
\end{align}
We introduce the following Lyapunov function
\begin{align}
\mathcal{V}_{2}=\sum_{i=1}^{n}{\mathcal{V}_{2_{i}}},
\end{align}
where 
\begin{align}
\mathcal{V}_{2_i}=\frac{1}{2}e_{\Omega_{i}}\cdot J_{i}\dot{e}_{\Omega_{i}}+k_{R}\Psi_{i}(R_{i},R_{d_{i}})+c_{2_i}e_{R_{i}}\cdot e_{\Omega_{i}}.
\end{align}
Consider a domain $D_{2}$ given by
\begin{align}
D_2 = \{ (R_{i},\Omega_{i})\in \SO\times\Re^3\,|\, \Psi_{i}(R_{i},R_{d_{i}})<\psi_{2_i}<2\}.\label{eqn:D2}
\end{align}
In this domain we can show that $\mathcal{V}_{2}$ is bounded as follows~\cite{Farhad2013}
\begin{align}\label{eqn:ffff1}
\begin{split}
z_{2_i}^{T}M_{i_{21}}z_{2_i}\leq\mathcal{V}_{2_i}\leq z_{2_i}^{T}M_{i_{22}}z_{2_i},
\end{split}
\end{align}
and $z_{2_i}=[\|e_{R_{i}}\|,\|e_{\Omega_{i}}\|]^{T}\in \Re^{2}$. Matrices $M_{i_{21}}$, $M_{i_{22}}$ are given by
\begin{align}
M_{i_{21}}=&\frac{1}{2}\begin{bmatrix}
k_{R}&-c_{2_i}\lambda_{M_{i}}\\
-c_{2_i}\lambda_{M_{i}}&\lambda_{m_{i}}
\end{bmatrix},\nonumber\\
M_{i_{22}}=&\frac{1}{2}\begin{bmatrix}
\frac{2k_{R}}{2-\psi_{2_i}}&c_{2_i}\lambda_{M_{i}}\\
c_{2_i}\lambda_{M_{i}}&\lambda_{M_{i}}\nonumber
\end{bmatrix},
\end{align}
The time derivative of $\mathcal{V}_2$ along the solution of the controlled system is given by
\begin{align*}
\dot{\mathcal{V}}_2  =&
\sum_{i=1}^{n}-k_\Omega\|e_{\Omega_{i}}\|^2 + c_{2_i} \dot e_{R_{i}} \cdot J_{i}e_{\Omega_{i}}+ c_{2_i} e_{R_{i}} \cdot J_{i}\dot e_{\Omega_{i}}.
\end{align*}
Substituting \refeqn{JeWdot}, the above equation becomes
\begin{align*}
\dot{\mathcal{V}}_2 =&
\sum_{i=1}^{n}-k_\Omega\|e_{\Omega_{i}}\|^2  + c_{2_i} \dot e_{R_{i}} \cdot J_{i}e_{\Omega_{i}}-c_{2_i} k_R \|e_{R_{i}}\|^2 \\
&+ c_{2_i} e_{R_{i}} \cdot ((J_{i}e_{\Omega_{i}}+d_{i})^\wedge e_{\Omega_{i}} -k_\Omega e_{\Omega_{i}}).
\end{align*}
We have $\|e_{R_{i}}\|\leq 1$, $\|\dot e_{R_{i}}\|\leq \|e_{\Omega_{i}}\|$~\cite{TFJCHTLeeHG}, and choose a constant $B_{2_i}$ such that $\|d_{i}\|\leq B_{i_2}$. Then we have
\begin{align}
\dot{\mathcal{V}}_2 \leq -\sum_{i=1}^{n} z_{2_i}^T W_{2_i} z_{2_i},\label{eqn:dotV2}
\end{align}
where the matrix $W_{2_i}\in\Re^{2\times 2}$ is given by
\begin{align*}
W_{2_i} = \begin{bmatrix} c_{2_i}k_R & -\frac{c_{2_i}}{2}(k_\Omega+B_{2_i}) \\ 
-\frac{c_{2_i}}{2}(k_\Omega+B_{2_i}) & k_\Omega-2c_{2_i}\lambda_{M_{i}} \end{bmatrix}.
\end{align*}
The matrix $W_{2_i}$ is a positive definite matrix if 
\begin{align}\label{eqn:c2}
c_{2_i}<\min\{\frac{\sqrt{k_{R}\lambda_{m_i}}}{\lambda_{M_i}},\frac{4k_{\Omega}}{8k_{R}\lambda_{M_i}+(k_{\Omega}+B_{i_2})^{2}} \}.
\end{align}
This implies that
\begin{align}\label{eqn:eq2}
\dot{\mathcal{V}}_{2}\leq -\sum_{i=1}^{n} {\lambda_{m}(W_{2_i})\|z_{2_i}\|^{2}},
\end{align} 
which shows stability of the attitude dynamics of quadrotors.
\subsubsection{Error Dynamics of the Payload and Links}
We derive the tracking error dynamics and a Lyapunov function for the translational dynamics of a payload and the dynamics of links. Later it is combined with the stability analyses of the rotational dynamics. From \refeqn{EOMM1}, \refeqn{EOMLin}, \refeqn{Ai}, and \refeqn{fi}, the equation of motion for the controlled dynamic model is given by
\begin{align}\label{eqn:salam}
\Mb\ddot \xb  + \Gb\xb=\Bb(u-u^{*})+\g(\xb,\dot{\xb}),
\end{align}
where
\begin{align}
u=\begin{bmatrix}
u_{1}\\
u_{2}\\
\vdots\\
u_{n}
\end{bmatrix},\; u^{*}=\begin{bmatrix}
-(M_{1T}+\frac{m_{0}}{n})ge_{3}\\
-(M_{2T}+\frac{m_{0}}{n})ge_{3}\\
\vdots\\
-(M_{nT}+\frac{m_{0}}{n})ge_{3}
\end{bmatrix},
\end{align}
and $\g(\xb,\dot{\xb})$ corresponds to the higher order terms. As $u_i=-f_iR_ie_3$ for the full dynamic model, $\delta u=u-u^*$ is given by
\begin{align}\label{eqn:deltauu}
\delta u=
\begin{bmatrix}
-f_{1}R_{1}e_{3}+(M_{1T}+\frac{m_{0}}{n})ge_{3}\\
-f_{2}R_{2}e_{3}+(M_{2T}+\frac{m_{0}}{n})ge_{3}\\
\vdots\\
-f_{n}R_{n}e_{3}+(M_{nT}+\frac{m_{0}}{n})ge_{3}
\end{bmatrix}.
\end{align}
The subsequent analyses are developed in the domain $D_{1}$
\begin{align}
D_1=\{&(\xb,\dot{\xb},R_i,e_{\Omega_i})\in\Re^{D_{\xb}}\times\Re^{D_{\xb}}\times \SO\times\Re^3\,|\,\nonumber\\
& \Psi_{i}< \psi_{1_i} < 1\}.\label{eqn:D}
\end{align}
In the domain $D_{1}$, we can show that 
\begin{align}
\frac{1}{2} \norm{e_{R_{i}}}^2 \leq  \Psi_i(R_i,R_{c_i}) \leq \frac{1}{2-\psi_{1_i}} \norm{e_{R_i}}^2\label{eqn:eRPsi1}.
\end{align}
Consider the quantity $e_{3}^{T}R_{c_i}^{T}R_{i}e_{3}$, which represents the cosine of the angle between $b_{3_i}=R_{i}e_{3}$ and $b_{3_{c_i}}=R_{c_i}e_{3}$. Since $1-\Psi_i(R_i,R_{c_i})$ represents the cosine of the eigen-axis rotation angle between $R_{c_i}$ and $R_i$, we have $e_{3}^{T}R_{c_i}^{T}Re_{3}\geq 1-\Psi_i(R_i,R_{c_i})>0$ in $D_{1}$. Therefore, the quantity $\frac{1}{e_{3}^{T}R_{c_i}^{T}R_i e_{3}}$ is well-defined. We add and subtract $\frac{f_i}{e_{3}^{T}R_{c_i}^{T}R_i e_{3}}R_{c_i}e_{3}$ to the right hand side of \refeqn{deltauu} to obtain
\begin{align}\label{eqn:hallaa}
\delta u=
\begin{bmatrix}
\frac{-f_1}{e_{3}^{T}R_{c_1}^{T}R_1 e_{3}}R_{c_1}e_{3}-X_1+(M_{1T}+\frac{m_{0}}{n})ge_{3}\\
\frac{-f_2}{e_{3}^{T}R_{c_2}^{T}R_2 e_{3}}R_{c_2}e_{3}-X_2+(M_{2T}+\frac{m_{0}}{n})ge_{3}\\
\vdots\\
\frac{-f_n}{e_{3}^{T}R_{c_n}^{T}R_n e_{3}}R_{c_n}e_{3}-X_n+(M_{nT}+\frac{m_{0}}{n})ge_{3}
\end{bmatrix}.
\end{align}
where $X_i \in \Re^{3}$ is defined by
\begin{align}\label{eqn:Xdef}
X_i=\frac{f_i}{e_{3}^{T}R_{c_i}^{T}R_i e_{3}}((e_{3}^{T}R_{c_i}^{T}R_i e_{3})R_i e_{3}-R_{c_i}e_{3}).
\end{align}
Using 
\begin{align}
-\frac{f_i}{e_{3}^{T}R_{c_i}^{T}R_i e_{3}}R_{c_i}e_{3}=-\frac{(\|A_i\|R_{c_i}e_{3})\cdot R_i e_{3}}{e_{3}^{T}R_{c_i}^{T}R_i e_{3}}\cdot -\frac{A_i}{\|A_i\|}=A_i,
\end{align}
the equation \refeqn{hallaa} becomes
\begin{align}
\delta u=
\begin{bmatrix}
A_{1}-X_1+(M_{1T}+\frac{m_{0}}{n})ge_{3}\\
A_{2}-X_2+(M_{2T}+\frac{m_{0}}{n})ge_{3}\\
\vdots\\
A_{n}-X_n+(M_{nT}+\frac{m_{0}}{n})ge_{3}
\end{bmatrix}.
\end{align}
Substituting \refeqn{Ai} into the above equation, \refeqn{salam} becomes
\begin{align}
\Mb\ddot \xb  + \Gb\xb=\Bb(-K_{\xb}\xb-K_{\dot{\xb}}\dot{\xb}-X)+\g(\xb,\dot{\xb}),
\end{align}
where $X=[X_{1}^T,\; X_{2}^T,\; \cdots,\; X_{n}^T]^{T}\in\Re^{3n}$.
It is rewritten in the following matrix form
\begin{align}\label{eqn:zdot1}
\dot{z}_{1}=\mathds{A} z_{1}+\mathds{B}(\Bb X+\g(\xb,\dot{\xb})),
\end{align}
where $z_{1}=[\xb,\dot{\xb}]^{T}\in\Re^{2D_{\xb}}$ and
\begin{align}
\mathds{A}=\begin{bmatrix}
0&I\\
-\Mb^{-1}(\Gb+\Bb K_{\xb})&-\Mb^{-1}\Bb K_{\dot{\xb}}
\end{bmatrix},
\mathds{B}=\begin{bmatrix}
0\\
\Mb^{-1}
\end{bmatrix}.
\end{align}
We can also choose $K_{\xb}$ and $K_{\dot{\xb}}$ such that $\mathds{A}\in\Re^{2D_{x}\times 2D_{x}}$ is Hurwitz. Then for any positive definite matrix $Q\in\Re^{2D_{\xb}\times 2D_{\xb}}$, there exist a positive definite and symmetric matrix $P\in\Re^{2D_{\xb}\times 2D_{\xb}}$ such that $\mathds{A}^{T}P+P\mathds{A}=-Q$ according to~\cite[Thm 3.6]{Kha96}.
\subsubsection{Lyapunov Candidate for Simplified Dynamics}
From the linearized control system developed at section 3, we use matrix $P$ to introduce the following Lyapunov candidate for translational dynamics
\begin{align}
\mathcal{V}_{1}=z_{1}^{T}Pz_{1}.
\end{align}
The time derivative of the Lyapunov function using the Leibniz integral rule is given by
\begin{align}\label{eqn:devrr}
\dot{\mathcal{V}_{1}}=\dot{z}_{1}^{T}Pz_{1}+z_{1}^{T}P\dot{z}_{1}.
\end{align}
Substituting \refeqn{zdot1} into above expression
\begin{align}\label{eqn:beforsimp}
\dot{\mathcal{V}}_{1}=z_{1}^{T}(\mathds{A}^{T}P+P\mathds{A})z_{1}+2z_{1}^{T}P\mathds{B}(\Bb X+\g(\xb,\dot{\xb})).
\end{align}
Let $c_{3}=2\|P\mathds{B}\Bb\|_{2}\in\Re$ and using $\mathds{A}^{T}P+P\mathds{A}=-Q$, we have
\begin{align}\label{eqn:test}
\dot{\mathcal{V}}_{1}\leq-z_{1}^{T}Qz_{1}+c_{3}\|z_{1}\|\|X\|+2z_{1}^{T}P\mathds{B}\g(\xb,\dot{\xb}).
\end{align}
The second term on the right hand side of the above equation corresponds to the effects of the attitude tracking error on the translational dynamics. We find a bound of $X_{i}$, defined at \refeqn{Xdef}, to show stability of the coupled translational dynamics and rotational dynamics in the subsequent Lyapunov analysis. Since 
\begin{align}
f_{i}=\|A_{i}\|(e_{3}^{T}R_{c_i}^{T}R_i e_{3}), 
\end{align}
we have
\begin{align}\label{eqn:ssstr}
\|X_i\|\leq\|A_i\|\|(e_{3}^{T}R_{c_i}^{T}R_i e_{3})R_i e_{3}-R_{c_i}e_{3}\|.
\end{align}
The last term $\|(e_{3}^{T}R_{c_i}^{T}R_i e_{3})R_i e_{3}-R_{c_i}e_{3}\|$ represents the sine of the angle between $b_{3_i}=R_i e_{3}$ and $b_{3_{c_i}}=R_{c_i}e_{3}$, since $(b_{3_{c_i}}\cdot b_{3_i})b_{3_i}-b_{3_{c_i}}=b_{3_i}\times(b_{3_i}\times b_{3_{c_i}})$. The magnitude of the attitude error vector, $\|e_{R_i}\|$ represents the sine of the eigen-axis rotation angle between $R_{c_i}$ and $R_i$. Therefore, $\|(e_{3}^{T}R_{c_i}^{T}R_i e_{3})R_i e_{3}-R_{c_i}e_{3}\|\leq\|e_{R_i}\|$ in $D_{1}$. It follows that
\begin{align}
\|(e_{3}^{T}R_{c_i}^{T}R_i e_{3})R_i e_{3}-R_{c_i}e_{3}\|&\leq\|e_{R_i}\|=\sqrt{\Psi_i(2-\Psi_i)}\nonumber\\
&\leq\{\sqrt{\psi_{1_i}(2-\psi_{1_i})}\triangleq\alpha_i\}<1,
\end{align}
therefore
\begin{align}
\|X_i\|&\leq \|A_i\|\|e_{R_i}\|\nonumber\\
&\leq\|A_i\|\alpha_i.
\end{align}
We find an upper boundary for
\begin{align}\label{eqn:AA}
A_i=-K_{\xb} \xb-K_{\dot{\xb}}\dot\xb +u_{i}^{*},
\end{align}
by defining
\begin{align}
\|u_{i}^{*}\|\leq B_{1_i},
\end{align} 
for a given positive constant $B_{1}$. defining $K_{max}\in\Re$
\begin{align*}
&K_{\max}=\max\{\|K_{\xb}\|,\|K_{\dot{\xb}}\|\},
\end{align*}
and then the upper bound of $A$ is given by
\begin{align}
\|A_i\| & \leq K_{\max}(\|\xb\|+\|\dot{\xb}\|)+B_{1}\nonumber\\
&\leq 2K_{\max}\|z_{1}\|+B_{1}.\label{eqn:normA}
\end{align}
Using the above steps we can show that
\begin{align}
\|X\|&\leq \sum_{i=1}^{n}((2K_{\max}\|z_{1}\|+B_{1})\|e_{R_{i}}\|)\nonumber\\
&\leq (2K_{\max}\|z_{1}\|+B_{1})\alpha,
\end{align}
where $\alpha=\sum_{i=1}^{n}\alpha_{i}$. Then, we can simplify \refeqn{test} as 
\begin{align}\label{eqn:eq1}
\dot{\mathcal{V}}_{1} \leq& -(\lambda_{\min}(Q)-2c_{3}K_{\max}\alpha) \|z_{1}\| ^{2}\nonumber\\
&+\sum_{i=1}^{n}c_{3}B_{1}\|z_{1}\|\|e_{R_{i}}\|+2z_{1}^{T}P\mathds{B}\g(\xb,\dot{\xb}).
\end{align}
\subsubsection{Lyapunov Candidate for the Complete System}
Let $\mathcal{V}=\mathcal{V}_{1}+\mathcal{V}_{2}$ be the Lyapunov function for the complete system. The time derivative of $\mathcal{V}$ is given by
\begin{align}
\dot{\mathcal{V}}=\dot{\mathcal{V}}_{1}+\dot{\mathcal{V}}_{2}.
\end{align}
Substituting \refeqn{eq1} and \refeqn{eq2} into the above equation
\begin{align}
\dot{\mathcal{V}}\leq& -(\lambda_{\min}(Q)-2c_{3}K_{\max}\alpha) \|z_{1}\| ^{2}+2z_{1}^{T}P\mathds{B}\g(\xb,\dot{\xb})\nonumber\\
&+\sum_{i=1}^{n}c_{3}B_{1} \|z_{1}\|\|e_{R_{i}}\|-\sum_{i=1}^{n}\lambda_{m}(W_{2_i})\|z_{2_i}\|^{2},
\end{align}
and using $\|e_{R_{i}}\|\leq \|z_{2_i}\|$, it can be written as
\begin{align}\label{eqn:finalsimp}
\dot{\mathcal{V}}\leq& -(\lambda_{\min}(Q)-2c_{3}K_{\max}\alpha) \|z_{1}\| ^{2}+2z_{1}^{T}P\mathds{B}\g(\xb,\dot{\xb})\nonumber\\
&+\sum_{i=1}^{n}c_{3}B_{1} \|z_{1}\|\|z_{2_i}\|-\sum_{i=1}^{n}\lambda_{m}(W_{2_i})\|z_{2_i}\|^{2}.
\end{align}
The $2z_{1}^{T}P\mathds{B}\g(\xb,\dot{\xb})$ term in the above equation is indefinite. The function $\g(\xb,\dot{\xb})$ satisfies
\begin{align*}
\frac{\|\g(\xb,\dot{\xb})\|}{\|z_{1}\|}\rightarrow 0\quad \mbox{as}\quad \|z_{1}\|\rightarrow 0.
\end{align*}
Then, for any $\gamma>0$ there exists $r>0$ such that
\begin{align*}
\|\g(\xb,\dot{\xb})\|<\gamma\|z_{1}\|\quad \forall\|z_{1}\|<r.
\end{align*}
Therfore
\begin{align}
2z_{1}^{T}P\mathds{B}\g(\xb,\dot{\xb})\leq 2\gamma\|P\|_{2}\|z_{1}\|^{2}.
\end{align}
Substituting the above inequality into \refeqn{finalsimp}
\begin{align}
\dot{\mathcal{V}}\leq& -(\lambda_{\min}(Q)-2c_{3}K_{\max}\alpha) \|z_{1}\| ^{2}+2\gamma\|P\|_{2}\|z_{1}\|^{2}\nonumber\\
&+\sum_{i=1}^{n}c_{3}B_{1} \|z_{1}\|\|z_{2_i}\|-\sum_{i=1}^{n}\lambda_{m}(W_{2_i})\|z_{2_i}\|^{2},
\end{align}
and rearranging
\begin{align}
\dot{\mathcal{V}}\leq -\sum_{i=1}^{n}(&\frac{\lambda_{\min}(Q)-2c_{3}K_{\max}\alpha}{n} \|z_{1}\| ^{2}\nonumber\\
&-c_{3}B_{1} \|z_{1}\|\|z_{2_i}\|+\lambda_{m}(W_{2_i})\|z_{2_i}\|^{2})\nonumber\\
&+2\gamma\|P\|_{2}\|z_{1}\|^{2},
\end{align}
we obtain
\begin{align}
\dot{\mathcal{V}}\leq-\sum_{i=1}^{n}(\zb_{i}^{T}W_{i}\zb_{i})+2\gamma\|P\|_{2}\|z_{1}\|^{2},
\end{align}
where $\zb_{i}=[\|z_{1}\|,\|z_{2_{i}}\|]^{T}\in\Re^{2}$ and
\begin{align}
W_i=\begin{bmatrix}
\frac{\lambda_{\min}(Q)-2c_{3}K_{\max}\alpha}{n}&-\frac{c_{3}B_{1_i}}{2}\\
-\frac{c_{3}B_{1_i}}{2}&\lambda_{m}(W_{2_i})
\end{bmatrix}.
\end{align}
By using $\|z_{1}\|\leq\|\zb_i\|$, we obtain
\begin{align}
\dot{\mathcal{V}}\leq -\sum_{i=1}^{n}(\lambda_{\min}(W_i)-\frac{2\gamma\|P\|_{2}}{n})\|\zb_{i}\|^{2}.
\end{align}
Choosing $\gamma<n(\lambda_{\min}(W_i))/2\|P\|_{2}$, and
\begin{align}
\lambda_{m}(W_{2_i})>\frac{n\|\frac{c_{3}B_{1_i}}{2}\|^2}{\lambda_{\min}(Q)-2c_{3}K_{\max}\alpha},
\end{align}
ensures that $\dot{\mathcal{V}}$ is negative definite. Then, the zero equilibrium is exponentially stable.

\bibliography{BibMaster}
\bibliographystyle{IEEEtran}

\end{document}